\newtheorem{theorem}{Theorem}[section]
\newtheorem{lemma}{Lemma}[section]
\newtheorem{proposition}{Proposition}[section]
\newtheorem{remark}{Remark}[section]
\theoremstyle{definition} \theoremstyle{remark}
\numberwithin{equation}{section}
\def\B{\mathcal{B}}
\def\no{\noindent} \def\p{\partial} 
\def \Vh0{\stackrel{\circ}{V}_h} \def\to{\rightarrow}
\newcommand{\q}{\quad}    \def\R{{\mathbb R}}
  \def\f{\frac}  
\def\p{\partial}
   \def\A{{\mathcal A}}
\newcommand{\ec}
{\mathrel{\raise2pt\hbox{${\mathop=\limits_{\raise1pt\hbox{\mbox{$\sim$}}}}$}}}
\def\beq{\begin{equation}} \def\eeq{\end{equation}}
\def\beqn{\begin{eqnarray}}  \def\eqn{\end{eqnarray}}
\def\beqnx{\begin{eqnarray*}} \def\eqnx{\end{eqnarray*}}
\def\be{\begin{enumerate}} \def\ee{\end{enumerate}}
\def\bn{\begin{enumerate}} \def\en{\end{enumerate}}
\def\bd{\begin{description}} \def\ed{\end{description}}
\def\bc{\begin{cases}} \def\ec{\end{cases}}
\def\bt{\begin{tabular}} \def\et{\end{tabular}}
\def\bm{\begin{pmatrix}} \def\em{\end{pmatrix}}
\begin{document}

\title{{\bf  Variational source condition for the reconstruction  of distributed fluxes}}

\author{ De-Han Chen\footnote{School of Mathematics \& Statistics, Central China Normal University, Wuhan 430079, People'Âs Republic of China. The work of DC is supported by  National Natural Science Foundation of China (No.11701205), and also by the Alexander von Humboldt foundation through a postdoctoral research fellowship and hosted by UniversitaÃÂÃÂt Duisburg-Essen.
  (dehan.chen@uni-due.de).
}
\and
Yousept Irwin\footnote{FakultaÃÂÃÂt fuÃÂÃÂr Mathematik, UniversitaÃÂÃÂt Duisburg-Essen, Thea-Leymann-Str. 9, D-45127 Essen, Germany. (irwin.yousept@uni-due.de). The work of IY is supported by the German Research Foundation Priority Program DFG
SPP 1962 ÃÂNon-smooth and Complementarity-based Distributed Parameter Systems: Simulation and Hierarchical OptimizationÃÂ, Project YO 159/2-1.} 
\and Jun Zou\footnote{Department of Mathematics, Chinese University of Hong Kong, Shatin, N.T., Hong Kong.
(zou@math.cuhk.edu.hk).
}
}

\maketitle

%%%%%%%%%%%%%%%%%%%%%%%%%%%%%%%%%%%%%%
%%%%%%%%%Introduction%%%%%%%%%%%%%%%%%
%%%%%%%%%%%%%%%%%%%%%%%%%%%%%%%%%%%%%%%

\begin{abstract}
This paper is devoted to  the inverse problem of recovering the unknown distributed flux  on an inaccessible part of boundary using measurement data on the accessible part.  We establish and verify a variational source condition for this inverse problem, leading to a  logarithmic-type convergence rate for the corresponding Tikhonov regularization method under a  low Sobolev regularity assumption on the distributed flux. Our proof is based on the conditional stability and Carleman estimates together with  the complex interpolation theory on a proper Gelfand triple. 

\end{abstract}

\medskip
\noindent\textbf{Keywords}: Inverse problem, Tikhonov regularization,  variational source condition, convergence rates, stability estimate, distributed flux reconstruction.

\medskip\medskip
\noindent\textbf{AMS subject classification 2000}: {}

\section{Introduction}

In this paper, we analyze an inverse problem of reconstructing the distributed flux on the inaccessible part of the boundary from measurement data on the accessible part of the boundary.  More precisely, we consider the following elliptic diffusion system:
\begin{equation} \label{elliptic}
\left\{\begin{array}{lll} -\nabla\cdot(\alpha(x)\nabla u(x))=f(x)\,\, &\text{in}\,\,
\Omega, \\[1.1ex]
-\alpha(x)\displaystyle\frac{\partial u}{\partial n}(x)=k(x)(u(x)-u_a(x))
\,\,&\text{on}\,\, \Gamma_a,
\\[1.1ex]
-\alpha(x)\displaystyle\frac{\partial u}{\partial n}(x)=q(x)\,\,&\text{on}\,\, 
\Gamma_i,
\end{array}\right.
\end{equation}
where the given data include the source term $f$, the ambient
concentration or temperature $u_a$, the concentration or heat
transfer coefficient $k$ and the
diffusivity coefficient $\alpha$. The boundary is given by   $\partial \Omega=\Gamma_a\cup\Gamma_i$, where $\Gamma_a$  denotes the accessible part, and
$\Gamma_i$ is the inaccessible one. 
The Neumann boundary term $-\alpha(x){\partial u}/{\partial n}=q(x)$ on
$\Gamma_i$, referred to as
{\it the distributed flux}, is the main concern of this work:

\bigskip
\noindent\textbf{(IP)} Given   noisy data $u^\delta$  of  the exact solution
$u^\dag$ on the accessible part $\Gamma_a$, we aim at
recovering the   distributed flux $q^\dag$ on the
inaccessible part $\Gamma_i$.
\bigskip

This inverse problem finds numerous important applications in diffusive, thermal and heat transfer problems, including the
real-time monitoring in steel industry \cite{Ali} and the
visualization by liquid crystal thermography \cite{DK}. Since it is  difficult to obtain an accurate measurement on the inaccessible boundary, such as the interior boundary of nuclear reactors and steel furnaces, engineers attempt to reconstruct the flux from the measurements on the accessible boundary. This leads to the inverse problem {\bf(IP)}, which is a severly ill-posed Cauchy problem in Hadamard's sense \cite{Had}:  If we replace the exact data  $u^\dag$ on  
the accessible part $\Gamma_a$ by a noisy pattern $u^\delta$ satisfying 
\beq\label{eq:noisy}
\|u^\dag-u^\delta \|_{\Gamma_a}\leq \delta, 
\eeq
where $\delta>0$ represents the noisy level, then there may not exist a solution to {\bf (IP)}. Even if a   solution exists and $\delta>0$ is very small, it may be far away from the exact one $q^\dag$. 
We refer to \cite{LiXieZou11,xiezou,ZK} and the references therein for theoretical and numerical results related to  {\bf (IP)}.

To deal with the ill-posedness, we  shall consider the   Tikhonov regularization technique by solving a  least-squares   minimization problem. Our main goal is to examine  the convergence rate of the regularized solution under an appropriate  choice of the noise level $\delta>0$ and the Tikhonov regularization parameter. It is well-known that a  smoothness assumption  on the true solution ({\it source condition}),  is required to obtain the convergence rate. In general, a (classical) source condition requires  the Fr\'{e}chet differentiability of the forward operator and further properties on the adjoint of the Fr\'{e}chet-derivative (cf. \cite{Lechleiter08,Kaltenbacher09}).   Our present work   focuses on the  so-called {\it variational source condition} (VSC).   The concept of VSC   was  originally introduced by Hofmann et al. \cite{HKPS07} based on the use of  a linear index function.  Convergence rates for a more general index function  were  proven independently in  \cite{BH10}, \cite{Fle10} and \cite{Grasm10}.  The paper \cite{Flemmingbuch12} contains a  modified proof of  the convergence rate result by \cite{Grasm10}.
Compared to the classical source condition,  
VSC does not require any differentiability assumption on the forward operator. More importantly,  convergence rates for the regularized solutions follow immediately from VSC under an appropriate parameter choice rule (see \cite{HofMat12}).

To the best of our  knowledge,  there are only very few contributions towards VSC  for  inverse problems governed by partial differential equations. 
% In particular, although there are some theoretical investigations and finite element simulation of the Tikhonov regularization for inverse heat flux problems \cite{LiXieZou11,xiezou}, there is few paper on the validity of  VSC for this problem. 
Hohage and  Weidling \cite{Hohage15,Hohage1502} derived VSC   for the Tikhonov regularization of inverse scattering problems, leading to the strong convergence with logarithmic-type rates for the corresponding regularized solutions.   VSC for ill-posed backward   Maxwell's equations was  analyzed in \cite{ChenYousept} (cf. also \cite{Yousept2013,Yousept2017} concerning the optimal control of nonlinear Maxwell's equations). For abstract linear operators, we   refer to \cite{AHR13,BFH13,ChenHofZou16,Hohage16} and the references therein.   A general criterion for the verification of VSC for linear inverse problem was established in \cite{Hohage16}.   For more details between VSC and classical source conditions, we refer the reader to \cite{Hohage15,Hohage1502,Hohage16}.  See also  \cite{ChenHofZou16} concerning recent results on VSC for elastic-net regularizations.

The main purpose of this paper is to establish and verify VSC  for the Tikhonov regularization of the inverse problem {\bf (IP)}. To this end, we will first establish a   sufficient condition on VSC for  a general ill-posed problem  (Lemma \ref{lemma:vsc}), 
 in terms of a sequence of appproximating orthogonal projectors, which is an extension to the one introduced in \cite{Hohage16}. The proposed sufficient condition consists of two separate conditions characterizing  the smoothness of the exact solution  and the ill-posedness of the inverse of the forward problem, respectively. 
In order to apply the sufficient condition to the inverse problem {\bf (IP)}, we shall  derive a   {\it conditional stability estimate} (Theorem \ref{continuity})  for every function $u\in H^2(\Omega)$ satisfying a specific second-order elliptic equation \eqref{eq:homo}.  In particular, the proposed  estimate reveals the dependence of   $\|u\|_{H^1(\Omega)}$ on  
$\|u\|_{L^2(\Gamma_a)}$ under  an a priori bound on the $H^2(\Omega)$-norm. The main tools to prove  the conditional stability estimate are the developed techniques    by \cite{Bourgeois,Phung} and  a specialized Carleman estimate (Lemma \ref{lemma:carleman}). Eventually, our result extends \cite{Bourgeois}, as the derived estimate makes use of  the $L^2(\Gamma_a)$-norm  of the Dirichlet data  (Proposition \ref{pro:boundary-boundary}  and Theorem \ref{the:H1:log}) instead of   the $H^1(\Gamma_a)$-norm as   in  \cite[Propositions 2.2-2.4 \& Theorems 2.2-2.3]{Bourgeois}.   
Finally, based on  the  developed conditional stability estimate in combination with the complex interpolation theory and the   Gelfand triple  $H^{1/2}(\Gamma_i)\subset L^2(\Gamma_i)\subset H^{-1/2}(\Gamma_i)$,
we   are able to establish a sequence of appproximating orthogonal projectors and 
 prove our main result on VSC (Theorem \ref{the:main}). In particular, it leads to a logarithmic-type convergence rate for the Tikhonov regularization under a low   Sobolev regularity assumption on the exact distributed flux $q^\dag\in H^s(\Gamma_i)$ with $s\in (0,1/2]$.

This paper is organized as follows.  Section \ref{sec:math} provides  the precise mathematical formulation of the inverse problem {\bf (IP)} and states our main theoretical findings (Theorems \ref{the:main} and \ref{continuity}). In Section \ref{sec:abstract}, we   present  a sufficient condition on VSC for a general ill-posed problem. The proof of  the conditional stability estimate (Theorem \ref{continuity})  is provided in  Section \ref{sec:aux}. The final section is devoted to the derivation of  Theorem  \ref{the:main}  on  VSC for  
  {\bf (IP)}.

\section{Mathematical formulation and main results}\label{sec:math}

We begin this section by recalling some terminologies and notations used in the sequel.  Given  a linear  operator $T:X \to X$ on a complex Hilbert space $X$, the notations $D(T)$, $\rho(T)$ and $\sigma(T)$ stand for the domain, resolvent and spectrum of $T$, respectively. A linear  operator 
$T:D(T)\subset X\to X$ is called closed, if its graph $\{(x,Tx), ~ x \in D(T)\}$ is closed in  $X\times X$. Furthermore, the adjoint  of  a densely defined operator $T:D(T)\subset X\to X$  is denoted by $T^*:D(T^*)\subset X\to X$. We call  $T:D(T)\subset X\to X$ symmetric,  if $Tx=T^*x$ holds true for all $x\in D(T)$, i.e., 
$(Tx,y)_X =( x,Ty)_X $ for all $x,y\in D(T)$. If a symmetric operator $T$ satisfies that $D(T)=D(T^*)$, then   $T$ is said to be self-adjoint.

  For $1\leq i,j\leq d$ and a     function $u$ defined on $\R^d$, we write $\p_i u:=\p u/\p x_i$, $\p_{i,j}u:=\p^2 u/\p x_i \p x_j$, and $\nabla u =(\p_1 u,\ldots,\p_d u)$.  Given the Hessian matrix $u''$ of a function $u$, we write $u''({x},{y}):=\sum_{i,j=1}^d \p_{i,j}u \cdot x^i\cdot y^j$ with the vectors ${x}=(x^1,\cdots,x^d), {y}=(y^1,\cdots,y^d) \in \mathbb{C}^d$. 
In addition, we will often use the notation $C$ to denote
generic positive constant  independent of the parameter or 
functions involved. Also, we use the expression 
$A\lesssim B$ to indicate that $A\leq C B$ for 
a positive constant $C$ that is independent of $A$ and $B$. 
For two Banach spaces $X$ and $Y$  that are continuously embedded in the same Hausdorff topological vector space, we denote by $[X,Y]_\theta$ ($0\leq\theta\leq 1$)   the complex interpolation space between $X$ and $Y$. 

For every $-\infty< s<\infty$, we define  fractional Sobolev space 
$$
H^s(\R^d):=\{u\in \mathcal{S}(\R^d)'\mid \|u\|^2_{H^s(\R^d)}:=\int_{\R^d}(1+|\xi|^2)^{s}|(\mathcal{F}u)(\xi)|^2 d\xi<+\infty  \},
$$
where $\mathcal{F}:\mathcal{S}(\R^d)'\to \mathcal{S}(\R^d)'$ is the Fourier transform and $\mathcal{S}(\R^d)'$ denotes the tempted distribution space  (see, e.g.,  \cite{Lions,Wloka,Yagibook}). For a bounded domain $ U \subset \R^d$ with a Lipschitz boundary $\p U$,  the space  $H^s(U)$ with a possibly non-integer exponent $s\geq 0$ is defined as the space of all complex-valued functions $v\in L^2(U)$ satisfying 
$V_{\vert U}=v$ for some $V\in H^s(\R^n)$, endowed with the norm 
$$
\|v\|_{s,U}:=\inf_{\substack{V_{\vert U}=v \\ V\in H^s(\R^n)}}\|V\|_{H^s(\R^n)}.
$$
{ When no confusion may be caused,
we   simply drop $U$ in the subscription of  $\|\cdot\|_{s,U}$.}  
For every $s \in [0,\infty)$, we denote by $\lfloor s \rfloor \in [0,s]$ the largest integer less or equal to $s$. In the case of $s \in (0,\infty)$ with $s= \lfloor s \rfloor + \sigma$ and $0<\sigma<1$, the norm $\|\cdot\|_{s,U}$ is equivalent  to  (cf. \cite{Yagibook})
$$
 \left(\sum_{|\alpha|\leq \lfloor s \rfloor } \|D^\alpha u\|_{ L^2(U) }^2+\sum_{|\alpha|\leq \lfloor s \rfloor } \, \, \iint\limits_{U\times U}\f{|D^\alpha u(x)-D^\alpha u(y)|^2}{|x-y|^{n+2\sigma}} dxdy\right)^{\f{1}{2}}.
$$
 If $s$ is a non-negative integer, then  $H^s(U)$  coincides with the classical Sobolev space. For a compact, $d$-dimensional   $C^{k,\kappa}$-manifold $M$ with an integer $k\geq 0$ and 
 $\kappa\in \{0,1\}$, we can define the Sobolev space $H^{s}(M)$ on $M$
 for all $0\leq s\leq k+\kappa$ via partitions of unity and Sobolev spaces $H^s(\R^d)$ (see, e.g.,  \cite{Wloka}). 
   { More precisely,  let $M$ be a $d$-dimensional $C^{k,\kappa}$ compact manifold with an admissible 
   $C^{k,\kappa}$-atlas $\{(U_i,\alpha_i)\}_{i\in I}$ for $M$ and a subordinate partition of unity $\beta_i$. Since $M$ is compact, the indexing set $I$ can be chosen to be finite, and possibly  by shrinking $U_i$,  the mappings $\alpha_i\circ \alpha_j^{-1}$ are  $\widetilde{ C}^{k,\kappa}$-diffeomorphisms  (cf. \cite[Proposition 4.1 ]{Wloka}).  Then, for $0\leq s\leq k+\kappa$, $H^s(M)$  is specified by  the space  of all $u\in L^2(M)$   such that for all $i\in I$ the functions
   $$
(u\cdot \beta_i)\circ \alpha_i^{-1}: \alpha_i(U_i)\subset \R^d\to \mathbb{C}  
$$  
 belong to $H^s(\R^{d})$. This forms   a Hilbert space equipped with 
 $$
 (u,v)_{H^s(M)}=\sum_{i\in I}((u\cdot \beta_i)\circ \alpha_i^{-1},(v\cdot \beta_i)\circ \alpha_i^{-1} )_{H^s(\R^{d})} \quad \textrm{and} \quad \|u\|_{s,M}:= \sqrt{(u,u)_{H^s(M)}}.
 $$
 One can show further that this definition is independent of the particular choice of atlas and partition of unity.  
 In particular, for a bounded  domain $U$ of class  $C^{k,\kappa}$, its boundary $\p U$ is a compact $C^{k,\kappa}$ manifold and  then the Sobolev space $H^s(\p U)$ is defined as above. By $H^{-s}(M)$ we denote the dual of $H^s(M)$ with respect to the inner product in $L^2(M)$.  We also write $H^0(M)=L^2(M)$ and denote its norm and scalar product by $\|\cdot\|_M$ and $(\cdot,\cdot)_M$. By a standard argument (as used in \cite[Theorem 7.7]{Lions}), for every $-(k+\kappa) \leq s_1<s_2\leq k+\kappa$, one has for all $\theta\in [0,1]$, 
\beq\label{comp:interp}
[H^{s_1}(M), H^{s_2}(M)]_\theta = H^{s_1(1-\theta)+s_2\theta}(M),
\eeq
with equivalent norms.   }

% In addition, if $k\geq 1$, then the boundary is of $N^{k,\kappa}$ property. In this case,  for  $s= \lfloor s \rfloor + \sigma$ and $0<\sigma<1$,  the norm of $H^s(\p U; \mathbb{C})$  can be characterized by surface integral 
%$$
%\|u\|_{H^s(\p U \mathbb{C})}^2
%=\sum_{|\alpha|\leq \lfloor s \rfloor } \|D^\alpha u\|_{ L^2(\p U;\mathbb{C}) }^2+\sum_{|\alpha|\leq \lfloor s \rfloor } \, \, \iint\limits_{\p U\times\p U }\f{|D^\alpha u(x)-D^\alpha u(y)|^2}{|x-y|^{n-1+2\sigma}} ds ds'. 
%$$ 

Let us now formulate  the   general    assumptions on the domain $\Omega$ and the coefficients involved  in \eqref{elliptic}.

\begin{enumerate}

\item[\bf{(H1)}] The domain $\Omega \subset \R^d$,  $d\geq 2$, is connected,  bounded and  of class  $C^{1,1}$. There exist  ($d-$1)-dimensional  compact $C^{1,1}$-manifolds  $\Gamma_a,\Gamma_i \subset  \R^{d}$ such that  $\Gamma_a \cap \Gamma_i = \emptyset$ and $\partial \Omega = \Gamma_i \cup \Gamma_a$.

\item[\bf{(H2)}]
In addition, $\alpha\in C^{1,1}(\overline{\Omega})$, $k\in 
C^1(\Gamma_a)$ with 
\beq
\alpha_{min}:=\min_{x\in \overline \Omega}{\alpha(x)}>0,~k_{min}:=\min_{x\in\Gamma_a}k(x)>0,
\eeq 
$f\in L^2(\Omega)$ and $u_a\in H^{1/2}(\Gamma_a)$.  
\end{enumerate}
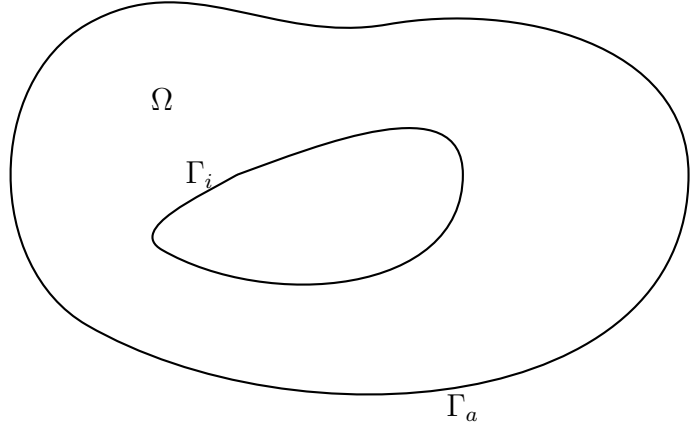
\begin{figure}[h!]
  \begin{center}
  \begin{tikzpicture}
  %\draw[help lines] (-4,-4) grid (6,5);
   \draw [thick] (-2,2) % Draws a line
      to [out=30,in=190] (2,2) % out, put is the angle control
      to [out=10,in=90] (6,0) 
      to [out=-90,in=-30] (-2,-2)
      to  [out=150,in=-150] (-2,2);
      \draw [thick] (0,0) % Draws a line
      to [out=30,in=180] (0,0) % out, put is the angle control
      to [out=20,in=90] (3,0) 
      to [out=-90,in=-30] (-1,-1)
      to  [out=150,in=-150] (0,0);
       \draw (-1,1) node {$\Omega$};% add text and symbols
            \draw (-0.5,0) node {$\Gamma_i$};  
            \draw (3,-3.1) node {$\Gamma_a$};
        \end{tikzpicture}
  \caption{An example of $\Omega$ in $\R^2$}
  \end{center}
\end{figure}

In particular, by {\bf{(H1)}}, the  trace operator $\text{tr}: H^{1}(\Omega) \to H^{ \f{1}{2}}(\p \Omega)$ 
 and   the   (outward) normal derivative trace operator $\f{\p}{\p n} : H^2(\Omega)\to H^{\f{1}{2}}(\p\Omega)$ are well-defined as  linear and bounded operators  (see e.g. \cite{Grisvard,Wloka}). For simplicity, we also  use the abbreviation   $\f{\p}{\p n}=\p_n$. 
%\begin{remark}\label{remark:geometric}
% If there exists a $C^{2,1}$-diffeomorphism $G:\overline{\Omega}\mapsto\{x\in\R^d; 1\leq |x|\leq R\}$ with
%$\Gamma_i=G^{-1}(\S_1^{d-1})$ and $\Gamma_a=G^{-1}(\S_R^{d-1})$, then  we can infer that there exists a 
%function $\psi$  satisfying the requirements in {\bf (H2)}. 
%
%
%Roughly speaking, the existence of $\psi$ is equivalent to the existence of a  $C^{2,1}$-diffeomorphism $G:\overline{\Omega}\mapsto M\times [1,R]$ for a  
%$d-1$ dimensional embedded $C^{2,1}$-manifold (without boundary) $M$ in $\R^d$.  
% Since the proof of this equivalence requires results from Morse theory, and goes beyond the scope of this paper, we omit it and refer the reader to 
%\cite[Theorem 2.31]{Matsumoto}.
% 
%\end{remark}

We now recall the basic   variational approach for a general inverse problem:
\beq\label{eq:general}
F(q)=g,
\eeq
 where $F:D(F)\subset X\to Y$,  with domain $D(F)$, is an operator between two
Hilbert spaces $X$ and $Y$    (see e.g. \cite{Flemmingbuch12,Had,Scherzer09}). In general,   $F^{-1}$ is unbounded so that the operator equation (\ref{eq:general}) is ill-posed.
% in the sense that 
%\eqref{eq:general} has no solution, if  the true observation data $F(q^\dag)$ comes with noisy, namely only the deterministic perturbation $g^\delta$ of true observation is available, i.e.,  
%$$
%\|F(q^\dag)-g^\delta\|_Y\leq \delta,
%$$
%where $\delta>0$ represents the noisy level. Furthermore, even if a solution exists for a very small $\delta$,  then it   may be far away from the true solution $q^\dag$. 
 To tackle the ill-posedness, we consider the Tikhonov regularization method: For a given perturbation $g^\delta$ of the exact data $F(q^\dagger)$ and a regularization parameter $\alpha>0$, we look for   the minimizer $q^\delta_\alpha \in D(F)$ of \beq\label{eq:Tik}
  \min\limits_{q\in D(F)}\left[\f{1}{\alpha}\|F(q)-g^\delta\|^2_Y+\f{1}{2}\|q\|^2_X\right].
\eeq
In general,   the convergence rate for $\|q^\dag-q^\delta_\alpha\|$ as $\delta,\alpha \to 0$ may be arbitrary slow   (see   \cite{Engl}). To  achieve a convergence rate of for the regularized solutions $\{q^\delta_\alpha\}$, a source condition on the true solution $q^\dag$ is required (see \cite{Flemmingbuch12,Had,Scherzer09}). 
As pointed out in the introduction, we focus on the variational source condition (VSC) of the form
\beq\label{vsc0}
\f{\beta}{2}\|q-q^\dag \|_X^2\leq  \f{1}{2}\|q\|_X^2-\f{1}{2}\|q^\dag\|_X^2+\Psi(\|F(q^\dag)-F(q)\|_Y) \textup{ for all } q\in D(F),
\eeq
where $\beta\in(0,1]$ and $\Psi$ is an {\it index function}, that is a   continuous and strictly increasing function $\Psi:(0,\infty)\to (0,\infty)$ satisfying $\lim\limits_{t\to 0^+} \Psi(t)=0$. If we are able to prove the existence of an index function $\Psi:(0,\infty)\to (0,\infty)$ and $\beta\in(0,1]$ satisfying \eqref{vsc0}, then the following convergence rate
$$
\|q^\delta_{\alpha(\delta)}-q^\dag\|^2_X=\Psi(\delta)\to 0 \,\,\text{as}\,\, \delta\to 0^+ 
$$
is obtained under an appropriate parameter choice on  $\delta$ and $\alpha(\delta)$
(see e.g. \cite{ChenHofZou16}). 

Let us now discuss  the Tikhonov regularization method for {\bf (IP)}. To this aim,  we first introduce the solution operator 
$$
S:L^2(\Gamma_i)\to H^1(\Omega), 
$$
that assigns to every element $q\in L^2(\Gamma_i)$  the unique solution  $u\in H^1(\Omega)$ of  the weak formulation to \eqref{elliptic}:  
$$
\int_\Omega \alpha \nabla u \cdot {\nabla \overline v} dx+\int_{\Gamma_a}ku\overline{v} dS
=\int_{\Omega} f\overline{v}dx-\int_{\Gamma_i} q\overline{v}dS-\int_{\Gamma_a}ku_a\overline{v}dS  \quad\forall\, v\in H^1(\Omega),
$$
where for simplicity we  set  $v = \text{tr} (v)$ on $\partial \Omega$ to express boundary values of a Sobolev function $v \in H^1(\Omega)$.
Furthermore, we introduce 
\begin{align*}
A:L^2(\Gamma_i) \to L^2(\Gamma_a), \quad
A(q)=\text{tr}(Sq)\mid_{\Gamma_a}.
\end{align*}
Then,  the inverse problem {\bf (IP)} is equivalent to solve the operator equation 
\beq\label{eq:F}
A(q)=u^\dag.
\eeq
The operator $A:L^2(\Gamma_i) \to L^2(\Gamma_a)$ is compact due to  the compactness of the embedding  $H^{1/2}(\Gamma_a) \hookrightarrow L^2(\Gamma_a) $. Therefore, by a well-known argument, the inverse operator equation \eqref{eq:F} is ill-posed (see also  \cite[Theorem~2.2]{xiezou} for  the parabolic cases). To tackle with the ill-posedness, we consider the Tikhonov regularization method:
\begin{equation}\label{leastsquare1}
\min\limits_{q\in \mathcal{U}_{q^\dag} }\left[\f{1}{\alpha}\|A(q)-u^\delta\|_{\Gamma_a}^2+\f{1}{2}\|q\|_{\Gamma_i}^2\right],
\end{equation}
where $\mathcal{U}_{q^\dag}$ is a non-empty, convex   and closed subset of $L^2(\Gamma_i)$ and $u^\delta$ is a noisy pattern of the exact data $u^\dag=A(q^\dag)$, i.e.,$\|u^\delta-A(q^\dag)\|_{0,\Gamma_i}\leq \delta$. By classical arguments, this quadratic minimization problem admits a unique solution $q^{\delta}_\alpha\in  \mathcal{U}_{q^\dag}$.   In this paper,   the    admissible set  $\mathcal{U}_{q^\dag}$ is specified as 
$$
\mathcal{U}_{q^\dag}:=\{q\in L^2(\Gamma_i) \, \, | \, \, \|q-{q^\dag}\|_{1/2,\Gamma_i}\leq M_0\},
 $$
with a given positive constant $M_0>0$. 
%   In view of {\bf{(H1)}} and {\bf{(H2)}},  if $q \in L^2(\Gamma_i)$, then the weak solution to \eqref{elliptic} enjoys  a higher regularity property in $H^{3/2}(\Omega)$. 
%In other words,
%\begin{equation}
%Sq \in H^{3/2}(\Omega) \quad \forall q \in \mathcal{U}_{q^\dag}.
%\end{equation}
%This follows from the classical elliptic regularity result  \cite{Lions}.  
Let us state the main result  of this paper:

%  Let us now state the main assumption on the exact  distributed flux we consider throughout this paper:
%
%\begin{enumerate}
%
%\item[\bf{(H3)}] Let  $0\neq q^\dag\in \mathcal{U}$ such that $u^\dag:=A(q^\dag)$. 
%
%
%\end{enumerate} 
%
%\begin{remark}
%If $q^\dag=0$, then  VSC is valid for every  concave index function $\Psi$. Therefore, we do not consider this trivial case.
%\end{remark}
%
%
%

 \begin{theorem}\label{the:main}
 Let  ${\bf (H1)}-{\bf (H2)}$. Suppose that $q^\dag\in H^s(\Gamma_i)$ for some $s\in  (0,1/2]$. Then,  for every $\kappa \in (0,1)$,  there exists a concave index function   $\Psi:(0,\infty)\to (0,\infty)$ satisfying  the variational source condition 
\beq\label{vscA}
\f{1}{4}\|q-q^\dag \|_{\Gamma_i}^2\leq  \f{1}{2}\|q\|_{\Gamma_i}^2-\f{1}{2}\|q^\dag\|_{\Gamma_i}^2+\Psi(\|A(q^\dag)-A(q)\|_{\Gamma_a}) \quad\forall\,q\in \mathcal{U}_{q^\dag}
\eeq
and   the decay rate condition 
\beq\label{Psi}
\displaystyle \Psi(\delta)\lesssim 
\f{1}{(\log\f{4C}{\delta})^{\f{4 s\kappa }{(1+2s)}}}~~as~~\delta\to 0^+,
\eeq
for some positive constant $C>0$. 
\end{theorem}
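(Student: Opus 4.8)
The plan is to obtain \eqref{vscA}--\eqref{Psi} from the abstract sufficient condition of Lemma~\ref{lemma:vsc}, applied to a suitably chosen family of orthogonal projectors on $L^2(\Gamma_i)$. First I would pick a positive self-adjoint operator $L$ on $L^2(\Gamma_i)$ with compact resolvent whose fractional powers reproduce, through \eqref{comp:interp}, the Sobolev scale $H^r(\Gamma_i)$ for $|r|\le 1$ (say $L=I-\Delta_{\Gamma_i}$); let $0<\lambda_1\le\lambda_2\le\cdots\to\infty$ be its eigenvalues and let $P_n$ be the spectral projection onto the eigenfunctions with eigenvalue $\le\lambda_n$. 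It then suffices to verify the two hypotheses behind Lemma~\ref{lemma:vsc}: a \emph{smoothness} bound for $q^\dag$ and an \emph{ill-posedness} bound relating $P_n$ to the forward operator $A$.

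The smoothness bound is immediate: since $q^\dag\in H^s(\Gamma_i)=[L^2(\Gamma_i),D(L)]_{s/2}$, the spectral calculus gives $\|(I-P_n)q^\dag\|_{\Gamma_i}\lesssim\lambda_n^{-s/2}\|q^\dag\|_{s,\Gamma_i}$. For the ill-posedness bound, given $q\in\mathcal U_{q^\dag}$ I would set $v:=Sq-Sq^\dag$. Since $S$ is affine, $v\in H^2(\Omega)$ solves the homogeneous system \eqref{eq:homo} (homogeneous Robin condition on $\Gamma_a$ and Neumann datum $q-q^\dag\in H^{1/2}(\Gamma_i)$ on $\Gamma_i$), and elliptic regularity on the $C^{1,1}$ domain together with well-posedness of the forward problem gives $\|v\|_{H^2(\Omega)}\lesssim\|q-q^\dag\|_{1/2,\Gamma_i}\le M_0$. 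Theorem~\ref{continuity} then applies and produces a logarithmic modulus $\omega$, of the form $\omega(t)\simeq(\log(C/t))^{-2\kappa}$, with $\|v\|_{H^1(\Omega)}\lesssim\omega(\|v\|_{\Gamma_a})$ and $\|v\|_{\Gamma_a}=\|A(q)-A(q^\dag)\|_{\Gamma_a}$. Combining this with the conormal-derivative trace bound $\|q-q^\dag\|_{-1/2,\Gamma_i}\lesssim\|v\|_{H^1(\Omega)}$ (valid because $\nabla\cdot(\alpha\nabla v)=0$ and $\Gamma_a\cap\Gamma_i=\emptyset$ by {\bf (H1)}) and the spectral smoothing bound $\|P_n(q-q^\dag)\|_{\Gamma_i}\lesssim\lambda_n^{1/4}\|q-q^\dag\|_{-1/2,\Gamma_i}$ — the estimate at the negative end of the Gelfand triple $H^{1/2}(\Gamma_i)\subset L^2(\Gamma_i)\subset H^{-1/2}(\Gamma_i)$ — I obtain
\[
\|P_n(q-q^\dag)\|_{\Gamma_i}\;\lesssim\;\lambda_n^{1/4}\,\omega\big(\|A(q)-A(q^\dag)\|_{\Gamma_a}\big)\qquad\forall\,q\in\mathcal U_{q^\dag}.
\]

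To conclude, write $w:=q-q^\dag$; from $\tfrac12\|q\|_{\Gamma_i}^2-\tfrac12\|q^\dag\|_{\Gamma_i}^2=\tfrac12\|w\|_{\Gamma_i}^2+(q^\dag,w)_{\Gamma_i}$ it is enough to show $-(q^\dag,w)_{\Gamma_i}\le\Psi(\|A(q)-A(q^\dag)\|_{\Gamma_a})$, and then \eqref{vscA} holds with $\beta=\tfrac12$. Splitting $q^\dag=P_nq^\dag+(I-P_n)q^\dag$, shifting $P_n$ onto $w$ by self-adjointness, and using the two bounds above together with $\|w\|_{\Gamma_i}\le CM_0$ on $\mathcal U_{q^\dag}$ yields, for every $n$,
\[
-(q^\dag,w)_{\Gamma_i}\;\le\;|(q^\dag,w)_{\Gamma_i}|\;\lesssim\;\lambda_n^{1/4}\,\omega\big(\|A(q)-A(q^\dag)\|_{\Gamma_a}\big)+\lambda_n^{-s/2}.
\]
Choosing $\lambda_n$ maximal subject to $\lambda_n^{(1+2s)/4}\lesssim\omega(\|A(q)-A(q^\dag)\|_{\Gamma_a})^{-1}$ balances the two terms and gives the bound with $\Psi(t):=c\,\omega(t)^{2s/(1+2s)}\simeq(\log(4C/t))^{-4s\kappa/(1+2s)}$, which is \eqref{Psi}; moreover $\Psi$ can be taken concave since a positive power of $1/\log$ is concave near $0$.

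Granting Theorem~\ref{continuity}, the main obstacle is twofold. First, one must set up a single projector family $\{P_n\}$ that is simultaneously compatible with the $H^s(\Gamma_i)$-approximation estimate and with the $H^{-1/2}(\Gamma_i)$-smoothing estimate — this is exactly where the Gelfand triple and the interpolation identity \eqref{comp:interp} are indispensable — and one must check that the resulting pair of estimates matches the hypotheses of Lemma~\ref{lemma:vsc} with a genuine nonlinear (logarithmic) modulus rather than a linear one. Second, the optimisation over $n$ has to be carried through so that $\Psi$ is a bona fide concave index function with precisely the stated rate. A secondary technical point, needed in order to invoke Theorem~\ref{continuity}, is the a~priori $H^2(\Omega)$ bound on $v$, which relies on elliptic regularity for the mixed Robin/Neumann boundary value problem on a merely $C^{1,1}$ domain (here the disjointness of $\Gamma_a$ and $\Gamma_i$ matters, so that no corner singularities arise).
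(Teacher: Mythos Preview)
Your overall strategy matches the paper's --- spectral projectors adapted to the Sobolev scale on $\Gamma_i$, the conditional stability of Theorem~\ref{continuity}, and the conormal-trace bound $\|q-q^\dag\|_{-1/2,\Gamma_i}\lesssim\|v\|_{1,\Omega}$ --- but the quantitative bookkeeping contains two compensating errors, and once the first is corrected the argument no longer yields the exponent in \eqref{Psi}.

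First, Theorem~\ref{continuity} gives $\|v\|_{1,\Omega}\lesssim M\big(\log(C_0M/\|v\|_{\Gamma_a})\big)^{-\kappa}$, so the modulus is $\omega(t)\simeq(\log(C/t))^{-\kappa}$, not $(\log(C/t))^{-2\kappa}$. Second, your Cauchy--Schwarz step $|(q^\dag,P_n w)|\le\|q^\dag\|_{\Gamma_i}\|P_n w\|_{\Gamma_i}$ throws away the $H^s$-regularity of $q^\dag$, and together with the direct bound $\|(I-P_n)q^\dag\|\,\|w\|\lesssim\lambda_n^{-s/2}M_0$ your balancing gives $\Psi(t)\simeq\omega(t)^{2s/(1+2s)}$; with the \emph{correct} $\omega$ this is $(\log)^{-2s\kappa/(1+2s)}$, exactly half the exponent asserted in \eqref{Psi}. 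The paper recovers the missing factor by (i) exploiting $q^\dag\in H^s(\Gamma_i)$ a second time, writing $q^\dag=\A^{-s}\omega_0$ so that
\[
\Re(q^\dag,P_\lambda w)_{\Gamma_i}=\Re(\omega_0,\A^{-s}P_\lambda w)_{\Gamma_i}\le\|\omega_0\|_{\Gamma_i}\,\|\A^{-s}P_\lambda w\|_{\Gamma_i}\lesssim\lambda^{1/2-s}\|w\|_{-1/2,\Gamma_i},
\]
which in your scaling replaces $\lambda_n^{1/4}$ by $\lambda_n^{1/4-s/2}$; and (ii) not insisting on the overly strong inequality $-(q^\dag,w)\le\Psi$, but instead feeding $f(\lambda)\sim\lambda^{-s}$, $g(\lambda)\sim\lambda^{1/2-s}$, $\Psi_0\sim(\log)^{-\kappa}$ into Lemma~\ref{lemma:vsc}, whose Young step turns $f(\lambda)\|w\|$ into $\tfrac{1-\beta}{2}\|w\|^2+Cf(\lambda)^2$. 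The infimum in \eqref{eq:psi} then delivers $\Psi(\delta)\lesssim\Psi_0(\delta)^{4s/(1+2s)}$, i.e.\ the claimed exponent $4s\kappa/(1+2s)$. For $s<1/2$ both tweaks are genuinely needed; for $s=1/2$ the first alone suffices.

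A minor technical point: the paper builds $\A$ from the sesquilinear form of $H^{1/2}(\Gamma_i)$ via the Gelfand triple, which needs only $\Gamma_i\in C^{1,1}$; your choice $L=I-\Delta_{\Gamma_i}$ with $D(L)=H^2(\Gamma_i)$ would require extra care on a merely $C^{1,1}$ manifold.
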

\begin{remark}
As mentioned above, this result implies that the regularized solution $\{q^\delta_\alpha\}$ of \eqref{leastsquare1} 
 possess the following convergence rate 
$$
\|q^\delta_\alpha-q^\dag\|^2_{\Gamma_i}\lesssim \f{1}{(\log\f{4C}{\delta})^{\f{4 s\kappa }{(1+2s)}}}\quad \delta\to 0^+,
$$
when the parameter $\alpha=\alpha(\delta,g^\delta)$ is chosen appropriately (see   \cite{ChenHofZou16,HofMat12} for more details). 
%
%The concavity assumption on the index function is only a technical assumption.  
%From \cite[Remark 1]{HofMat12}, it is easy to see that if a VSC (\ref{vsc0}) holds with an index function $\Psi$ satisfying $\lim_{\delta\to 0^+}\f{\Psi(\delta)}{\sqrt{\delta}}\nearrow+\infty$, then we can always find a concave majorant index function $\Psi^{*}$ still fulfills the same VSC. 
%In addition, if $\Psi(\delta^2)\leq \Psi_0(\delta)$ for $\delta \in (0,\delta_0)$ with some $\delta_0>0$, and  $\Psi_0$ is an index function and is  concave over $(0,\delta_0)$, then we can find a concave majorant index function $\Psi^*$ of $\Psi$ fulfilling $\Psi^{*}(\delta)\lesssim \Psi_0(\sqrt{\delta})$. We note here that {$\Psi_0(\sqrt{\delta})$ is also concave over $(0,\delta_0)$}. Therefore, in view of the fact that for any $r>0$, the function $\delta\to 1/|\log\f{4C}{\delta}|^r$ is concave over $(0,4Ce^{-r-1})$,  Theorem \ref{the:main} implies at least a logarithmic convergence rate of the regularized solution $q^\delta_\alpha$, provided that the regularization parameter $\alpha=\alpha(\delta,q^\delta)$ is chosen properly.
\end{remark}

The key tool to prove Theorem \ref{the:main} is a conditional stability estimate 
for  every function $u \in H^2(\Omega)$ satisfying
\beq\label{eq:homo}
\left\{\begin{array}{lll}
\nabla\cdot (\alpha(x) \nabla u(x))=0\quad &\textrm{in } \Omega,\\[1.1ex]
-\alpha(x) \displaystyle \f{\p u(x)}{\p n}=k(x) u(x)\quad &\textrm{on } \Gamma_a. 
\end{array}\right.
\eeq
 { We shall prove an estimate for $\|u\|_{1,\Omega}$ depending on $\|u\|_{\Gamma_a}$ under the {\it a priori} bounded set: }
\beq\label{eq:M}
\mathcal{M}_M:=\{u\in H^{2}(\Omega) \ |\|u\|_{2,\Omega} \leq M \},
\eeq
where    the constant $M>0$ is a prescribed constant.
\smallskip 
{
\begin{theorem}\label{continuity}
Assume that ${\bf (H1)}-{\bf(H2)}$ hold. 
Let   $u \in H^2(\Omega)$ be a  function satisfying (\ref{eq:homo}) within $ \mathcal{M}_M$ defined by (\ref{eq:M}). Then, for every 
$\kappa\in (0,1)$,  there exist two positive  constants $C,C_0>0$, independent of $u$ and $M$, such that 
\beq
\|u\|_{1,\Omega}\leq \f{CM}{\log\left(\f{C_0M}{\| u\|_{\Gamma_a}}\right)^\kappa}.
\eeq
 In particular, if $\| u\|_{\Gamma_a}=0$, then  $u$ vanishes. \end{theorem}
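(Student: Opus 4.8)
The plan is to localize the problem near the inaccessible boundary $\Gamma_i$ and propagate smallness of the Cauchy data from $\Gamma_a$ into the interior and then up to $\Gamma_i$, using a Carleman estimate of the type recorded in Lemma \ref{lemma:carleman}. The overall structure follows the three-step strategy of \cite{Bourgeois,Phung}: first a \emph{boundary-to-interior} estimate that bounds $\|u\|_{1}$ on an interior subdomain $\omega\Subset\Omega$ by a power of $\|u\|_{\Gamma_a}$ (plus the a priori bound $M$); then a \emph{three-ball / propagation of smallness} argument that chains such estimates across $\Omega$ to control $\|u\|_{1}$ on a neighborhood of $\Gamma_i$; and finally a \emph{global interpolation-type estimate} that upgrades a Hölder bound $\|u\|_{1,\Omega}\lesssim M^{1-\mu}\varepsilon^{\mu}$ (with $\varepsilon$ comparable to $\|u\|_{\Gamma_a}$) to the stated logarithmic bound with exponent $\kappa\in(0,1)$. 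The last passage is standard: a Hölder estimate with a frequency-dependent constant, optimized over the Carleman parameter, always degrades to a logarithmic modulus of continuity, and the loss of the endpoint (why one only gets $\kappa<1$ and not $\kappa=1$) comes precisely from this optimization.

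First I would use the equation \eqref{eq:homo}: since $\nabla\cdot(\alpha\nabla u)=0$ in $\Omega$ and $-\alpha\,\partial_n u = k u$ on $\Gamma_a$, the pair $(u|_{\Gamma_a},\partial_n u|_{\Gamma_a})$ of Cauchy data on $\Gamma_a$ is controlled by $\|u\|_{\Gamma_a}$ alone (the Robin condition converts the normal derivative into a zeroth-order term, using $k\in C^1(\Gamma_a)$ and $k_{\min}>0$ from {\bf(H2)}). This is exactly the point where the present result improves on \cite{Bourgeois}: there is no need for the $H^1(\Gamma_a)$-norm of the Dirichlet trace. I would then apply the Carleman estimate of Lemma \ref{lemma:carleman} with a weight function whose level sets foliate $\Omega$ starting from $\Gamma_a$, absorbing the right-hand side of the Carleman inequality (which involves only $\nabla\cdot(\alpha\nabla u)=0$ and the boundary terms on $\Gamma_a$) and extracting a bound on a weighted interior energy of $u$. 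Choosing the level of the foliation and estimating the weights from above and below on fixed subdomains yields the first interior Hölder estimate. A standard covering/chaining argument (three-balls inequality) then transports this to a fixed interior neighborhood $\mathcal{O}$ of $\Gamma_i$, still as a Hölder estimate in terms of $\varepsilon:=\|u\|_{\Gamma_a}$ and $M$.

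Next I would go from $\mathcal{O}$ up to $\Gamma_i$ itself and close the $H^1(\Omega)$-estimate. Write $\Omega = \mathcal{O}\cup(\Omega\setminus\mathcal{O}')$ for nested neighborhoods $\mathcal{O}'\Subset\mathcal{O}$ of $\Gamma_i$; on $\mathcal{O}$ one already has the Hölder bound, and on the remaining part I would again invoke a Carleman estimate, now with a weight peaked away from $\Gamma_i$, together with the a priori bound $\|u\|_{2,\Omega}\le M$ to handle the uncontrolled portion of the boundary. Combining the two pieces gives $\|u\|_{1,\Omega}\le C\,M^{1-\mu}\varepsilon^{\mu}$ for some $\mu\in(0,1)$ — but with a catch: the constants degenerate, or the exponent $\mu$ depends on the Carleman parameter $\lambda$, because propagation of smallness across a full neighborhood of a boundary piece on which we have \emph{no} information is only logarithmically stable. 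Making this explicit and optimizing in $\lambda$ (balancing a term like $e^{C\lambda}\varepsilon$ against $e^{-c\lambda}M$, or more precisely a term polynomial in $\lambda$ against $\lambda^{-1}M$) converts the family of Hölder bounds into the single estimate
\beq
\|u\|_{1,\Omega}\le \f{CM}{\big(\log(C_0 M/\|u\|_{\Gamma_a})\big)^{\kappa}},
\eeq
valid for any $\kappa<1$. Finally, if $\|u\|_{\Gamma_a}=0$ the right-hand side is $0$ (the logarithm blows up), forcing $u\equiv 0$; alternatively one argues directly that zero Cauchy data on $\Gamma_a$ plus the elliptic equation give $u\equiv0$ by unique continuation.

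The main obstacle is the third step: handling the part of $\partial\Omega$ — namely $\Gamma_i$ — where no boundary condition is imposed. Near $\Gamma_a$ the Robin condition gives genuine Cauchy data and the Carleman machinery applies cleanly; but to reach $\Gamma_i$ one must propagate through to a neighborhood that touches $\Gamma_i$ without creating boundary terms there that cannot be absorbed. This is exactly where the a priori bound $u\in\mathcal{M}_M$ enters and where the interpolation/optimization producing the logarithmic (rather than Hölder) rate, and the endpoint loss $\kappa<1$, is unavoidable. Care is also needed in the geometric construction of the Carleman weights so that their level sets respect the splitting $\partial\Omega=\Gamma_a\cup\Gamma_i$ and so that $\Gamma_a\cap\Gamma_i=\emptyset$ (guaranteed by {\bf(H1)}) is genuinely used to separate the "good" and "bad" parts of the boundary.
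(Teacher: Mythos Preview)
Your overall three-step architecture (boundary-to-interior H\"older estimate, interior propagation of smallness, logarithmic loss when reaching $\Gamma_i$, then optimization in the Carleman parameter) matches the paper's route through Proposition~\ref{pro:small}, Theorem~\ref{the:H1:log} and Lemma~\ref{lemma:ineq}(ii). The optimization you describe is exactly Lemma~\ref{lemma:ineq}(ii), and your identification of the logarithmic step as coming from the neighborhood of $\Gamma_i$ is correct.

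There is, however, a genuine gap in your first step. You claim that the Robin condition $-\alpha\,\partial_n u = k u$ on $\Gamma_a$ by itself reduces the Cauchy data to $\|u\|_{\Gamma_a}$, and that \emph{this} is the improvement over \cite{Bourgeois}. That is not right. The Robin condition controls only the \emph{normal} derivative: $\|\partial_n u\|_{\Gamma_a}\lesssim \|u\|_{\Gamma_a}$. But the Carleman boundary term in Lemma~\ref{lemma:carleman} (and hence the boundary-to-interior estimate, Proposition~\ref{pro:small}(b)) involves the full trace of $|\nabla u|^2$, i.e.\ the \emph{tangential} gradient as well; this is why the local estimate there reads $\|u\|_{1,\Gamma_a}+\|\partial_n u\|_{\Gamma_a}$ and not $\|u\|_{\Gamma_a}+\|\partial_n u\|_{\Gamma_a}$. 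The Robin condition says nothing about tangential derivatives, so your sentence ``there is no need for the $H^1(\Gamma_a)$-norm'' is unjustified as stated.

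The paper closes this gap not via the Robin condition but via the interpolation
\[
\|u\|_{1,\Gamma_a}\lesssim \|u\|_{3/2,\Gamma_a}^{2/3}\,\|u\|_{0,\Gamma_a}^{1/3}
\]
(Proposition~\ref{pro:boundary-boundary}), followed by Young's inequality and the trace bound $\|u\|_{3/2,\Gamma_a}\lesssim\|u\|_{2,\Omega}$. The price is that the remainder term in the local estimate is upgraded from $\epsilon^{s}\|u\|_{1,\Omega}$ to $\epsilon^{s}\|u\|_{2,\Omega}$, which is exactly why the a~priori set $\mathcal{M}_M$ is defined with the $H^2$-norm. Without this interpolation step (or an equivalent device), your argument does not get off the ground with only $\|u\|_{\Gamma_a}$ on the right-hand side.
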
 \color{blue}}

%%%%%%%%%%%%%%%%%%%%%%%%%%%%%%%%%%%%%%%%%%%%
%%%%%%%%%Absract Approach%%%%%%%%%%%%%%%%%%%
%%%%%%%%%%%%%%%%%%%%%%%%%%%%%%%%%%%%%%%%%%%%%%

\section{Sufficient condition for VSC}\label{sec:abstract}

In this section, we   present a sufficient condition to verify   
VSC for \eqref{eq:general}, which is an extension   of  \cite[Theorem 2.1]{Hohage16}.     Evidently,   VSC of the form (\ref{vsc0}) is equivalent to the form below:
\beq\label{vsc1}
\Re ( q^\dag, q^\dag-q )_X\leq  \f{1-\beta}{2}\|q^\dag-q\|^2_X+\Psi(\|F(q^\dag)-F(q)\|_Y) \quad \forall\, \,q\in D(F).
\eeq
Thus, we shall verify \eqref{vsc1} instead of (\ref{vsc0}) directly.  The result below yields a sufficient condition for  \eqref{vsc1}. The  proof follows  the lines  of \cite[Theorem 2.1]{Hohage16} and \cite[Theorem 2.5]{BFH13}.  However, since our result considers the concavity of index functions as well as the complex settings,  we could not find a precise reference covering our situation.  In addition,  significant modifications to the original idea are necessary. Therefore,   we include a  proof  for the sake of  completeness.   

\begin{lemma}\label{lemma:vsc}
Let $0 \neq q^\dag \in D(F)$, $\lambda_0\geq 0$, $\{P_\lambda\}_{\lambda\geq \lambda_0} $ be a family of orthogonal projectors   from $X$ to $X$,  and let $f, g: [\lambda_0,\infty)\to \R^+$  be continuous functions satisfying   
\begin{enumerate}
\item[$\bullet$]   $f$ is strictly decreasing and fullfils  $\lim\limits_{\delta \to\infty}f(\delta)=0$ 
\item[$\bullet$]    $g$ is strictly increasing and  fullfils  $\lim\limits_{\delta \to\infty}g(\delta)=+\infty$.
\end{enumerate}
Furthermore, suppose that there exist a concave index function $\Psi_0$ and two constants $\widehat C \geq 0$ and  $\beta\in (0,1)$  such that  
\begin{align}
&\|q^\dag-P_\lambda q^\dag\|_{X}\leq  f(\lambda), {\color{red} \text{}} \label{firstconditon}\\
&\Re(q^\dag, P_\lambda (q^\dag-q))_X\leq  g(\lambda)\Psi_0(\|F(q^+)-F(q)\|_Y) + \widehat C f(\lambda)\|q-q^\dag\|_X \label{secondcondition} \\
&\qquad \qquad   \qquad   \qquad   \qquad   \qquad   \qquad   \qquad     \forall~q\in D(F)\, \text{with}\,\|q^\dag-q\|_X<\f{2}{1-\beta} \|q^\dag\|_X\nonumber 
\end{align}
holds for all $\lambda\geq \lambda_0$. Then,    \eqref{vsc1} holds true with the following 
 concave index function:
 \beq\label{eq:psi}
\Psi : (0,\infty) \to (0,\infty), \quad \Psi(t):=\inf_{\lambda\geq \lambda_0 }\left(g(\lambda)\Psi_0({t}) + \f{(\widehat C+1)^2}{2(1-\beta)}f(\lambda)^2 \right).
\eeq
This index function      satisfies  the decay estimate
\begin{equation}\label{decayesti} 
\Psi(\delta)\lesssim (f^2\circ\Theta^{-1})(\Psi_0(\delta))\,\, \text{as}\,\,\delta\to +0,
\end{equation}
where    $\Theta^{-1}$ is the inverse   of   $\Theta: [\lambda_0,\infty) \to (0,\infty), \quad \lambda\mapsto \f{(\widehat C+1)^2}{2(1-\beta)} \frac{f(\lambda)^2}{g(\lambda)}$, which obviously satisfies 
$\lim\limits_{\lambda\to +\infty}\Theta(\lambda)=0$. 
\end{lemma}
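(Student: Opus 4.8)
The plan is to verify the equivalent inequality \eqref{vsc1} by bounding the linear term $\Re(q^\dag,q^\dag-q)_X$, splitting it through the projector $P_\lambda$ as
$$
\Re(q^\dag,q^\dag-q)_X=\Re(q^\dag,(I-P_\lambda)(q^\dag-q))_X+\Re(q^\dag,P_\lambda(q^\dag-q))_X,
$$
and then estimating each piece. For the first term I would use Cauchy--Schwarz together with hypothesis \eqref{firstconditon} to get $\Re(q^\dag,(I-P_\lambda)(q^\dag-q))_X\le\|(I-P_\lambda)q^\dag\|_X\|q^\dag-q\|_X\le f(\lambda)\|q^\dag-q\|_X$ (using that $I-P_\lambda$ is an orthogonal projector, hence self-adjoint and of norm $\le 1$). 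For the second term I would invoke \eqref{secondcondition} directly, giving $g(\lambda)\Psi_0(\|F(q^\dag)-F(q)\|_Y)+\widehat Cf(\lambda)\|q^\dag-q\|_X$. Adding the two yields, for every admissible $q$ and every $\lambda\ge\lambda_0$,
$$
\Re(q^\dag,q^\dag-q)_X\le g(\lambda)\Psi_0(\|F(q^\dag)-F(q)\|_Y)+(\widehat C+1)f(\lambda)\|q^\dag-q\|_X.
$$

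Next I would absorb the $\|q^\dag-q\|_X$ factor into the quadratic term on the right of \eqref{vsc1} via Young's inequality $ab\le\frac{1-\beta}{2}a^2+\frac{1}{2(1-\beta)}b^2$ with $a=\|q^\dag-q\|_X$ and $b=(\widehat C+1)f(\lambda)$, producing
$$
(\widehat C+1)f(\lambda)\|q^\dag-q\|_X\le\frac{1-\beta}{2}\|q^\dag-q\|_X^2+\frac{(\widehat C+1)^2}{2(1-\beta)}f(\lambda)^2.
$$
Substituting back, the quadratic term matches exactly the $\frac{1-\beta}{2}\|q^\dag-q\|_X^2$ appearing in \eqref{vsc1}, and since the remaining bound $g(\lambda)\Psi_0(\cdot)+\frac{(\widehat C+1)^2}{2(1-\beta)}f(\lambda)^2$ holds for every $\lambda\ge\lambda_0$, one may take the infimum over $\lambda$, which is precisely the definition \eqref{eq:psi} of $\Psi$. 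A small but necessary point: \eqref{secondcondition} is only assumed for $q$ with $\|q^\dag-q\|_X<\frac{2}{1-\beta}\|q^\dag\|_X$; I would argue (as in \cite[Theorem 2.1]{Hohage16}) that for $q$ violating this bound the inequality \eqref{vsc1} holds trivially, because then $\frac{1-\beta}{2}\|q^\dag-q\|_X^2>\|q^\dag\|_X\|q^\dag-q\|_X\ge\Re(q^\dag,q^\dag-q)_X$, so \eqref{vsc1} is satisfied regardless of $\Psi$.

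It then remains to check that $\Psi$ defined by \eqref{eq:psi} is a genuine concave index function and to derive the decay estimate \eqref{decayesti}. Concavity follows since $\Psi$ is an infimum of functions $t\mapsto g(\lambda)\Psi_0(t)+\text{const}$, each concave in $t$ (as $g(\lambda)>0$ and $\Psi_0$ is concave), and an infimum of concave functions is concave; strict monotonicity and the limit $\Psi(0^+)=0$ come from the corresponding properties of $\Psi_0$ together with the fact that letting $\lambda\to\infty$ drives $f(\lambda)^2\to0$ (so the infimum can be made small). For \eqref{decayesti}, I would, for fixed small $t=\delta$, choose $\lambda=\lambda(\delta)$ balancing the two terms in \eqref{eq:psi}, namely $g(\lambda)\Psi_0(\delta)\approx\frac{(\widehat C+1)^2}{2(1-\beta)}f(\lambda)^2$, i.e. $\Psi_0(\delta)\approx\Theta(\lambda)$, giving $\lambda=\Theta^{-1}(\Psi_0(\delta))$ and hence $\Psi(\delta)\lesssim f(\Theta^{-1}(\Psi_0(\delta)))^2=(f^2\circ\Theta^{-1})(\Psi_0(\delta))$; here one uses that $g(\lambda)\Psi_0(\delta)=\frac{(\widehat C+1)^2}{2(1-\beta)}f(\lambda)^2$ at the balancing point by the definition of $\Theta$. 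The main obstacle I anticipate is not any single estimate but the bookkeeping: carefully handling the complex inner product (taking real parts throughout), correctly justifying the trivial case outside the ball $\{\|q^\dag-q\|_X<\frac{2}{1-\beta}\|q^\dag\|_X\}$, and making the balancing argument for \eqref{decayesti} rigorous when $\Theta$ or $\Theta^{-1}$ need not be given by closed forms — one must argue with monotonicity of $\Theta$ (strictly decreasing to $0$) rather than explicit inversion, and control the error from not hitting the infimum exactly.
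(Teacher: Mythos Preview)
Your proposal is correct and follows essentially the same route as the paper's proof: the same projector splitting, the same use of \eqref{firstconditon} via Cauchy--Schwarz and \eqref{secondcondition} directly, the same Young's inequality to absorb the linear term, the same trivial-case argument outside the ball, and the same balancing choice $\lambda=\Theta^{-1}(\Psi_0(\delta))$ for the decay estimate. The only point where the paper is slightly more explicit is strict monotonicity of $\Psi$: it observes that for fixed $t_2>0$ the expression in \eqref{eq:psi} tends to $+\infty$ as $\lambda\to\infty$, so the infimum is \emph{attained} at some $\lambda_*$, whence $\Psi(t_1)\le g(\lambda_*)\Psi_0(t_1)+\tfrac{(\widehat C+1)^2}{2(1-\beta)}f(\lambda_*)^2<\Psi(t_2)$ --- you should spell this out rather than leaving it as ``bookkeeping.''
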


%\begin{remark}
%We need to show that the function  $\Psi$ defined by (\ref{eq:psi})
%is indeed an index function. 
%
%
%
%By the definition, $
%\Psi$ is strictly increasing and u.s.c. since for each $k$, $(C+1)^2a(k)^2+b(k)\Psi_0({t})$ is continuous and strictly increasing. Also, it follows immediately that $\Psi$  is also l.s.c., which implies that it is continuous.
%\end{remark}

\begin{proof}
For each $q$ satisfying $\|q^\dag-q\|_X\geq\f{2}{1-\beta} \|q^\dag\|_X$,    the Cauchy-Schwarz inequality implies 
$$
\Re(q^\dag, q^\dag-q)_X \leq \f{1-\beta}{2}\|q^\dag-q\|^2_X.
$$
Therefore, we  only need to show   (\ref{vsc1}) for   $\|q^\dag-q\|_X<\f{2}{1-\beta} \|q^\dag\|_X$. For this case, using the orthogonal projection $\{P_\lambda\}_{\lambda\geq \lambda_0}$ and \eqref{firstconditon}-\eqref{secondcondition}, it follows   that 
\begin{align*}
\Re(q^\dag, q^\dag-q)_X &=\Re( P_\lambda q^\dag, q^\dag-q)_X+\Re((I-P_\lambda)q^\dag, q^\dag-q)_X \\
&\leq \Re(q^\dag, P_\lambda (q^\dag-q))_X+  f(\lambda)\|q^\dag-q\|_X\\
&\leq g(\lambda)\Psi_0(\|F(q^\dag)-F(q)\|_Y)+\f{1-\beta}{2}\|q^\dag-q\|^2_X+\f{(\widehat C+1)^2}f(\lambda)^2,
\end{align*}
where we have used Young's inequality for the last inequality. In conclusion, for every $\lambda\geq \lambda_0$,  the inequality 
$$
\Re(q^\dag, q^\dag-q)_X\leq \f{1-\beta}{2}\|q^\dag-q\|_X^2+
 g(\lambda)\Psi_0(\|F(q^\dag)-F(q)\|_Y)+\f{(\widehat C+1)^2}{{ 2(1-\beta)}}f(\lambda)^2
$$
holds for all $q\in D(F)$.  Thus, defining $\Psi : (0,\infty) \to (0,\infty)$ as in \eqref{eq:psi}, we see that  \eqref{vsc1} is satisfied. 
It remains to show that $\Psi: (0,\infty) \to (0,\infty)$ is   a concave index function.  First, 
since $\Psi:(0,\infty) \to (0,\infty)$ is an infimum of  concave 
functions,  we obtain that $\Psi: (0,\infty) \to (0,\infty)$ is concave.  Since for $t\in(0,\infty)$, $-\Psi(t)>-\infty$ and $-\Psi$ is convex over $(0,\infty)$, we infer that $\Psi: (0,\infty) \to (0,\infty)$ is continuous   \cite[Corollary 47.6]{Zeidler}.  

Finally, we prove the decay estimate (\ref{decayesti}), which also implies the continuity of $\Psi$ at 0. 
Since  { $\lim\limits_{\delta\to 0^+} \Psi_0(\delta)=0$}, if $\delta$ is sufficiently small, there exits a unique $\lambda$ such that 
\beq
\Psi_0(\delta)=\f{(\widehat C+1)^2}{ 2(1-\beta)}\f{f(\lambda)^2}{g(\lambda)},
\eeq
 as the function $\lambda\mapsto \f{(\widehat C+1)^2}{ 2(1-\beta)}\f{f(\lambda)^2}{g(\lambda)} $ is continuous, strictly deceasing and convergent  to $0$ as $\lambda\to \infty$.
If we set $\Theta(\lambda):=\f{(\widehat C+1)^2}{2(1-\beta)}\f{f(\lambda)^2}{g(\lambda)}$,  then we obtain $\lambda=\Theta^{-1}(\Psi_0(\delta))$, which yields 
$$
\Psi(\delta)\leq \f{(\widehat C+1)^2}{1-\beta} f(\Theta^{-1}(\Psi_0(\delta)))^2 \quad \text{as}\,\,\delta\to +0,
$$
and consequently the decay estimate \eqref{decayesti} is obtained.

To show that  $\Psi$ is strictly increasing, we choose $t_1,t_2$ with $0<t_1<t_2$. For  $t_2>0$, it holds that  
$$
 g(\lambda)\Psi_0({t_2}) + \f{(\widehat C+1)^2}{2(1-\beta)}f(\lambda)^2\to \infty, \quad  \text{ as } \lambda\to +\infty.
 $$
  Thus, according to the definition \eqref{eq:psi}, there exist $\lambda_*$ such that $\Psi(t_2)=g(\lambda_*)\Psi_0({t_2})+ \f{(\widehat C+1)^2}{2(1-\beta)}f(\lambda_*)^2$.  Then, as $\Psi_0$ is strictly increasing, it follows that  
\beq
\Psi(t_1)\underbrace{\leq}_{\eqref{eq:psi}} \left(g(\lambda_*)\Psi_0({t_1}) + \f{(\widehat C+1)^2}{2(1-\beta)}f(\lambda_*)^2 \right) <\Psi(t_2). 
\eeq
\end{proof}
\section{Derivation of  Theorem \ref{continuity}}\label{sec:aux}

The goal of this section is to prove Theorem \ref{continuity},   which is a critical auxiliary result for the proof of  Theorem  \ref{the:main}. 
% In view of Lemma \ref{lemma:vsc}, we need to find suitable projection $\{P_\lambda\}_{\lambda\geq \lambda_0}$ such that, 
%for any $q\in \mathcal U$, the real part of inner product $\Re ( q^\dag-q,P_\lambda (q^\dag-q))_{\Gamma_i}$ can be estimate by the norm of 
%$F(q^\dag)-F(q)$.  
The importance of Theorem \ref{continuity} lies in the fact that it  establishes the continuous dependence of  
$q^\dag -q $ on 
$(u(q^\dag)-u(q))_{\mid_{\Gamma_a}}$ under a priori bound on $u(q^\dag)-u(q)$. After proving  Theorem \ref{continuity}, we shall construct suitable orthogonal projections  $\{P_\lambda\}_{\lambda\geq \lambda_0}$ such that  $P_\lambda (q^\dag-q)$ can be estimated by $ (u(q^\dag)-u(q))_{\mid_{\Gamma_i}}$.

{
To prove Theorem \ref{continuity}, we   follow and modify  the techniques by \cite{Bourgeois}.  To be more precise, we   establish a Carleman estimate and   derive a series of 
local conditional estimates results. Combining all these theoretical findings   yields the desired global result.  Let us underline that our results extend  \cite{Bourgeois}: We study  a   class of  second-order elliptic operators of divergence form, while  \cite{Bourgeois} focuses on the Laplace operator. Furthermore, our  estimates  make use of the $L^2(\Gamma_a)$-norm  of the Dirichlet data (Proposition \ref{pro:boundary-boundary}  and Theorem \ref{the:H1:log}) instead of the   $H^1(\Gamma_a)$-norm as considered in  \cite[Propositions 2.2-2.4 \& Theorems 2.2-2.3]{Bourgeois}. 
}

Let us first derive a global Carleman estimate for the following  second-oder elliptic operator:
\beq\label{eq:nonhomo}
(L_\alpha u)(x):=-\nabla\cdot(\alpha(x)\nabla u(x)).
\eeq
  Carleman estimate was initially introduced to study the quantification of unique continuation,  which goes back to the early work of T. Carleman himself. In recent years, Carleman's estimate has been applied to the fields of control theory and inverse problem for PDEs  (see e.g. \cite{Rousseau12,RLebeau12}).  
Our derivation here follows from a computational method   by A. Fursikov and O. Yu. Imanuvilov (see e.g. \cite{Fursikov,Rousseau12})

\begin{lemma}\label{lemma:carleman}
Let $U\subset \mathbb{R}^d$ be a bounded Lipschitz domain and  $\psi\in C^{1,1}(\overline{U})$  be a real-valued function such that 
$$
|\nabla \psi(x)|\geq c \,\, \text{for all}\, \,x\in \overline{U}\,\,\text{and}\,\,
D^2\psi\in L^\infty(U)^{d\times d}
$$ 
with some positive constant $c>0$. Furthermore,  let  $\varphi=e^{\gamma \psi}$ with $\gamma>0$. Then, there exist { $\gamma_{*}\ge 1$ and $\tau_* \ge 1$ such that  
\beq\label{eq:Carleman}
\begin{split}
\displaystyle \int_{U}(\tau^3\gamma^4 \varphi^3 e^{2\tau\varphi}|u|^2+\tau \gamma^2 e^{2\tau \varphi} |\nabla u|^2)dx \lesssim  \int_{U}e^{2\tau\varphi}|L_\alpha u|^2dx + \\
\displaystyle   + \int_{\p U}\left(\tau^3 \gamma^3 \varphi^3 e^{2\tau \varphi} |u|^2+
\tau\gamma \varphi  e^{2\tau\varphi} |\nabla  u|^2 \right) dS 
\end{split}
\eeq
for all $\tau \ge \tau_*$,   $\gamma \ge \gamma_{*}$ and     $u\in H^2(\Omega)$.
}
\end{lemma}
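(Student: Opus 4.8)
The plan is to follow the classical Fursikov--Imanuvilov conjugation method. First I would reduce to the principal part: since $\alpha\in C^{1,1}(\overline U)$ with $\alpha_{\min}>0$, writing $L_\alpha u = -\alpha\Delta u - \nabla\alpha\cdot\nabla u$, the lower-order term $\nabla\alpha\cdot\nabla u$ can be absorbed into the left-hand side once $\tau\gamma^2$ is large enough (it is controlled by $\tau\gamma^2 e^{2\tau\varphi}|\nabla u|^2$ with a small constant, using $|\nabla\alpha\cdot\nabla u|^2 \le C|\nabla u|^2$ and choosing $\tau_*,\gamma_*$ large). So it suffices to prove the estimate for $\alpha\Delta u$, hence essentially for $\Delta u$ after dividing by $\alpha$ (which is bounded above and below).

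Next I would perform the standard substitution $v = e^{\tau\varphi}u$ and compute the conjugated operator $P_\tau v := e^{\tau\varphi}\Delta(e^{-\tau\varphi}v)$, splitting it into its formally symmetric part $P_+ v = \Delta v + \tau^2|\nabla\varphi|^2 v$ (plus lower-order $\varphi$-derivative terms) and formally skew-symmetric part $P_- v = -2\tau\nabla\varphi\cdot\nabla v - \tau(\Delta\varphi)v$. Then $\|P_\tau v\|_{L^2(U)}^2 = \|P_+v\|^2 + \|P_-v\|^2 + 2\Re(P_+v,P_-v)_{L^2(U)}$, and the heart of the matter is to integrate the cross term $2\Re(P_+v,P_-v)$ by parts. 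Because $U$ is only Lipschitz and $\psi\in C^{1,1}$, I would justify these integrations by parts first for $u\in C^\infty(\overline U)$ (or $H^2$ with the divergence theorem valid on Lipschitz domains, which it is) and then pass to the limit; the weight $\varphi=e^{\gamma\psi}$ is $C^{1,1}$, which is exactly the regularity needed for $D^2\varphi\in L^\infty$. After the integration by parts, the dominant interior contributions are, using $|\nabla\varphi|=\gamma\varphi|\nabla\psi| \ge c\gamma\varphi$ and $D^2\varphi = \gamma^2\varphi\,\nabla\psi\otimes\nabla\psi + \gamma\varphi\,D^2\psi$, of the form $\tau^3\gamma^4\varphi^3|v|^2$ and $\tau\gamma^2\varphi|\nabla v|^2$ with positive constants, modulo lower-order terms in $\tau,\gamma$ that are absorbed for $\tau\ge\tau_*$, $\gamma\ge\gamma_*$; the boundary contributions collect into terms bounded by $\tau^3\gamma^3\varphi^3 e^{2\tau\varphi}|u|^2$ and $\tau\gamma\varphi e^{2\tau\varphi}|\nabla u|^2$ on $\partial U$ after undoing the substitution $v=e^{\tau\varphi}u$ (note $\nabla v = e^{\tau\varphi}(\nabla u + \tau\nabla\varphi\, u)$ contributes the extra power of $\tau\gamma\varphi$ in the $|u|^2$-type boundary term, consistent with the $\tau^3\gamma^3\varphi^3$ scaling claimed). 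Finally I would convert the estimate for $v$ back to $u$: $\int e^{2\tau\varphi}(\tau^3\gamma^4\varphi^3|u|^2 + \tau\gamma^2|\nabla u|^2)\,dx \lesssim \int \tau^3\gamma^4\varphi^3|v|^2 + \tau\gamma^2|\nabla v|^2\,dx$ (again using $\nabla v = e^{\tau\varphi}(\nabla u+\tau\nabla\varphi\,u)$ and absorbing the $|u|^2$ cross term into the $\tau^3$ term for $\tau$ large), and combine with $\|P_\tau v\|^2 \lesssim \int e^{2\tau\varphi}|\Delta u|^2\,dx$ together with the reduction from $L_\alpha$ to $\Delta$ from the first step.

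The main obstacle I anticipate is bookkeeping the powers of $\tau$ and $\gamma$ correctly through the cross-term integration by parts so that the good terms genuinely dominate: one must verify that every "bad" term arising from $D^2\psi$, $\Delta\psi$, the substitution, and the lower-order pieces of $P_+$ carries strictly fewer powers of $\tau$ or of $\gamma$ than the corresponding good term, so that choosing first $\gamma_*$ large (to beat the $\psi$-curvature terms using the ellipticity $|\nabla\psi|\ge c$) and then $\tau_*$ large (to beat the remaining lower-order terms) closes the estimate. The only regularity subtlety — integration by parts against the weight on a Lipschitz domain — is handled by the $C^{1,1}$ hypotheses on $\psi$ and $\alpha$ and by first working with smooth $u$ and using density of $C^\infty(\overline U)$ in $H^2(U)$.
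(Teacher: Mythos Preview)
Your proposal is correct and follows the same Fursikov--Imanuvilov conjugation strategy as the paper, but with one structural difference worth noting. You first reduce $L_\alpha$ to the Laplacian by writing $L_\alpha u=-\alpha\Delta u-\nabla\alpha\cdot\nabla u$ and absorbing the first-order perturbation $\nabla\alpha\cdot\nabla u$ into the left-hand side once $\tau\gamma^2$ is large; you then run the standard symmetric/skew-symmetric split $P_\tau=P_++P_-$ for the conjugated Laplacian. The paper instead conjugates the full divergence-form operator $L_\alpha$ directly, obtaining $W_\varphi v=e^{\tau\varphi}L_\alpha(e^{-\tau\varphi}v)$, and pairs the two pieces $W_2v$ and $W_1v$ via the $\alpha^{-1}$-weighted inner product $(W_2v,\alpha^{-1}W_1v)_U$; this keeps the divergence structure $-\nabla\cdot(\alpha\nabla\,\cdot\,)$ intact through all four integrations by parts and lets the variable coefficient cancel cleanly in the cross terms. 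The paper also uses the Fursikov--Imanuvilov rearrangement of the zero-order part (putting $2\tau\alpha\gamma^2\varphi|\nabla\psi|^2v$ into $W_1$ rather than the naive $\tau\Delta\varphi\,v$), which simplifies the bookkeeping you flag as the main obstacle. Your reduction approach is more modular and reuses the constant-coefficient Carleman estimate as a black box; the paper's direct approach avoids the extra absorption step and would generalize more readily to matrix-valued or less regular $\alpha$, but at the cost of carrying $\alpha$ through the term-by-term computation.
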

%The proof of Lemma \ref{lemma:carleman} has been contained in Appendix. 
{
\begin{remark} Obviously, $\psi(x) = e^{x_1}$ satisfies the assumption of Lemma \ref{lemma:carleman}. For our applications, we   only make use of a simpler version of Lemma \ref{lemma:carleman}. More precisely, by fixing a $\gamma>\gamma_*$, we may drop $\gamma$ out of \eqref{eq:Carleman}  and conclude  that the estimate
$$
  \int_{U}e^{2\tau\varphi}(\tau^3  |u|^2+\tau   |\nabla u|^2)dx 
 \lesssim   \int_{\p U}e^{2\tau\varphi}\left(  |\nabla  u|^2  +\tau^3   |u|^2\right) dS 
+\int_{U} e^{2\tau\varphi} |L_\alpha u|^2dx 
$$
holds true for all $\tau \ge \tau_*$ and $u \in H^2(U)$. This estimate now is of the same form as the ones from \cite{Isakov}.  \end{remark}}

%  Similar Carleman estimates hold for a large class of second oder partial differential operators, which even are  not necessarily elliptic, provided that the weighted function $\psi$
%belongs to $C^2(\overline{U})$ and satisfies so-called pseudo-convex property.  If second oder partial differential operator is elliptic, then the weighted function always fulfills the pseudo-convexity when $\lambda$ is large enough (see. e.g. \cite{Isakov}).  Since we could not find a precise reference covering our situation, 
%For the sake of completeness, we include a full proof here for the sake of completeness.

\begin{proof} 

Let $u \in H^2(U)$, $f:= L_\alpha u$  and $v:=e^{\tau\varphi}u$. We define the conjugated operator 
\beq
W_\varphi v:=e^{\tau\varphi}L_\alpha (e^{-\tau\varphi} v) =  -  e^{\tau\varphi}  \nabla\cdot(\alpha \nabla (e^{-\tau\varphi} v) ).
\eeq
From the expansion of $W_\varphi$, we further introduce 
\begin{align*}
W_2v:&=-\nabla\cdot (\alpha\nabla v)-\tau^2\gamma^2\alpha \varphi^2 | \nabla \psi |^2v, \\
W_1v:&= 2\tau\gamma \alpha\varphi \nabla \psi \cdot\nabla  v+2\tau \alpha \gamma^2\varphi |\nabla \psi|^2 v.
\end{align*}
Straightforward computations yield 
\beq\label{eq:ll}
e^{\tau \varphi}f=W_\varphi v=W_2v+W_1v-\tau\gamma \varphi  \nabla \alpha\cdot \nabla \psi  v-\tau \alpha \gamma^2\varphi |\nabla \psi|^2 v
+\gamma\tau\alpha \varphi \Delta \psi v,
\eeq
where we have use   $\varphi=e^{\gamma \psi}$ and the  identity
$
\Delta \varphi=\gamma^2\varphi |\nabla \psi|^2 +\gamma \varphi \Delta \psi. 
$
By  \eqref{eq:ll},  
$$
\|\frac{1}{\sqrt{\alpha}} W_2 v\|^2+\|\frac{1}{\sqrt{\alpha}} W_1 v\|^2+2\Re (W_2 v,\alpha^{-1}W_1v)_U=\|{g}\|^2_U,
$$
with 
\beq\label{def:g}
{g}:=\frac{1}{\sqrt{\alpha}}(e^{\tau \varphi}f +\tau\gamma \varphi  \nabla \alpha\cdot \nabla \psi  v +\tau \alpha \gamma^2\varphi |\nabla \psi|^2 v
-\gamma \tau \alpha \varphi \Delta \psi v 
). 
\eeq
Then, by the definition of $\varphi$, it follows that    
\beq\label{core}
2\Re (W_2 v,\alpha^{-1}W_1v)_U\leq \|g\|^2_{U}.
\eeq
Our goal now is to establish a  proper lower estimate for the left-hand side of \eqref{core}. To this end, let us denote by $I_{ij}$ the real part of the scalar product between  the $i$-th term of $W_2$ and the $j$-th term of $\alpha^{-1}W_1$.

\no {\bf Term $I_{11}$:}
Integration by parts yields 
\begin{align*}
I_{11}:&=-2\tau\gamma \Re  (\nabla\cdot (\alpha\nabla v),  \varphi \nabla\psi \cdot\nabla  v)_U \\
&=2\tau\gamma\Re \int_U \alpha \nabla v \cdot \nabla (\varphi\nabla \psi \cdot \nabla \overline{v}) dx 
-2\tau\gamma \Re \int_{\p\Omega}\alpha\varphi \p_{n}v   (\nabla \psi \cdot \nabla 
{\overline{v}}) dS \\
&=2\tau\gamma \int_{U} \alpha\varphi \psi''(\nabla v,\nabla\overline{ v}) dx+2\tau\gamma^2 \int_U 
\alpha \varphi |\nabla v\cdot \nabla \psi |^2 dx
+
\tau\gamma 
 \int_U \alpha \varphi  \nabla \psi \cdot\nabla |\nabla v|^2 dx \\
 &\quad
 -2\tau\gamma \Re \int_{\p U}\alpha\varphi \p_{n}v   (\nabla \psi \cdot \nabla 
{\overline{v}}) dS,\\
&=2\tau\gamma \int_{U} \alpha\varphi \psi''(\nabla v,\nabla\overline{ v}) dx+2\tau\gamma^2 \int_U 
\alpha \varphi |\nabla v\cdot \nabla \psi |^2 dx
-
\tau\gamma 
 \int_U \nabla\cdot(\alpha \varphi  \nabla \psi)  |\nabla v|^2 dx 
 \\
 &\quad+\tau\gamma 
 \int_{\p U} \alpha \varphi  \p_n \psi  |\nabla v|^2 dx 
 -2\tau\gamma \Re \int_{\p U}\alpha\varphi \p_{n}v   (\nabla \psi \cdot \nabla 
{\overline{v}}) dS,
\end{align*}  
where $\psi''(\nabla v,\nabla \overline{v} ):=\sum_{1\leq i,j\leq d} \p_{i,j}\psi\p_i v\p_j \overline{v}$, which is obviously real-valued. As the third term of the above sum can be rewritten as  
\begin{align*}
 -
\tau\gamma 
 \int_U \nabla\cdot(\alpha \varphi  \nabla \psi)  |\nabla v|^2 dx 
 =-\tau\gamma\int_{U}(\nabla \alpha\cdot { \nabla \psi} 
 +\alpha \gamma |\nabla \psi|^2+\alpha \Delta \psi ) \varphi |\nabla v|^2 dx, 
\end{align*}
we obtain that 
\begin{align}\label{eq:I11}
I_{11}&=2\tau\gamma \int_{U} \alpha\varphi \psi''(\nabla v,\nabla\overline{ v}) dx+2\tau\gamma^2 \int_U 
\alpha \varphi |\nabla v\cdot \nabla \psi |^2 dx\notag
\\
&\quad
-\tau\gamma\int_{U}(\nabla \alpha\cdot { \nabla \psi} 
 +\alpha \gamma |\nabla \psi|^2+\alpha \Delta \psi ) \varphi |\nabla v|^2 dx\\
 &\quad + \tau\gamma 
 \int_{\p U} \alpha \varphi  \p_n \psi  |\nabla v|^2 dx 
 -2\tau\gamma \Re \int_{\p U}\alpha\varphi \p_{n}v   (\nabla \psi \cdot \nabla 
{\overline{v}}) dS.\notag
\end{align}

\no {\bf Term $I_{12}$:} Similarly,  
\begin{align*}
I_{12}&= -2\tau\gamma^2 \Re (\nabla\cdot(\alpha\nabla v),\varphi|\nabla \psi|^2   v)_U\\
&=2\tau\gamma^2\Re \int_U \alpha\nabla v\cdot \nabla (\varphi|\nabla \psi|^2  \overline{v}) dx -2\tau\gamma^2\Re\int_{\p U} \alpha\varphi|\nabla \psi|^2 \p_n v \overline{v} dS\\
&=2\tau\gamma^2\int_U \alpha\varphi |\nabla \psi|^2   |\nabla v|^2 dx+2\tau\gamma^2\Re
\int_U \alpha(\nabla(\varphi |\nabla \psi|^2) \cdot\nabla v) \overline{v}dx\\
&\quad-2\tau\gamma^2\Re\int_{\p U} \alpha\varphi|\nabla \psi|^2 \p_n v \overline{v} dS.
\end{align*}
{\bf Term $I_{21}$:} Also, Integration by parts results in
\begin{align*}
I_{21}&=-2\tau^3\gamma^3 \Re( \alpha |\nabla \psi |^2\varphi^2 v, \varphi \nabla \psi \cdot \nabla v  )_U = -2\tau^3\gamma^3  \int_{U} \alpha |\nabla\psi |^2 \varphi^3 \nabla\psi \cdot \nabla |v|^2 dx \\
&= 2\tau^3\gamma^3\int_U\nabla\cdot( \alpha |\nabla\psi |^2 \varphi^3 \nabla\psi) |v|^2 dx  
 -2\tau^3\gamma^3  \int_{\p U} \alpha |\nabla\psi |^2 \varphi^3 \p_n \psi  |v|^2 dx.
\end{align*}
{\bf Term $I_{22}$:} A direct computation implies 
\begin{align*}
I_{22}&=-2\tau^3\gamma^4 \int_U \alpha|\nabla \psi|^4\varphi^3 |v|^2 dx.  
\end{align*}
Regrouping the terms of all expansions of $I_{ij}$ and  as the second term in the right-hand side of  \eqref{eq:I11} is nonnegative, we have 
\begin{align}\label{sum0}
2\Re (W_2 v,\alpha^{-1} W_1 v)\geq \int_{U} \tau^3\gamma^4 \alpha_0|v|^2 dx+\int_{U} \tau\gamma^2 \alpha_1|\nabla v|^2 dx +T_1+T_2,
\end{align}
with
\begin{align*}
\displaystyle \alpha_0:=& \frac{2}{\gamma}\nabla\cdot( \alpha |\nabla\psi |^2 \varphi^3 \nabla\psi) -2 \alpha |\nabla \psi|^4\varphi^3, \quad \alpha_1:=  (\alpha |\nabla \psi|^2  -\frac{1}{\gamma}\nabla \alpha\cdot {\nabla \psi }
 -\frac{\alpha}{\gamma}\Delta \psi) \varphi, \\
T_1:=&2\tau\gamma \int_{U} \alpha\varphi \psi''(\nabla v,\nabla\overline{ v}) dx+2\tau\gamma^2\Re
\int_U \alpha(\nabla(\varphi |\nabla \psi|^2) \cdot\nabla v) \overline{v}dx,\\
T_2:=& \tau\gamma 
 \int_{\p U} \alpha \varphi  \p_n \psi  |\nabla v|^2 dx 
 -2\tau\gamma \Re \int_{\p U}\alpha\varphi \p_{n}v   (\nabla \psi \cdot \nabla 
{\overline{v}}) dS-2\tau\gamma^2\Re\int_{\p U} \alpha\varphi|\nabla \psi|^2 \p_n v \overline{v} dS\\  &-2\tau^3\gamma^3  \int_{\p U} \alpha |\nabla\psi |^2 \varphi^3 \p_n \psi  |v|^2 dx.
\end{align*}
{
Let us now verify the following estimates:
\beq\label{eq:claim}
 \varphi^3 \lesssim      \alpha_0   \textrm{ a.e. in } U  \quad\text{and}\quad   \varphi   \lesssim   \alpha_1 \textrm{ a.e. in } U,
\eeq
for all sufficiently large $\gamma>0$. Indeed,    since $\alpha(x) \ge \alpha_{\min} >0$ and  $|\nabla \psi(x)|\geq c>0$ hold true for all $x \in \overline U$,  and  $D^2\psi\in L^\infty(U)^{d\times d}$, we find that
$$
\alpha_1 \ge  (\alpha_{\min} c^2  -\frac{1}{\gamma}\nabla \alpha\cdot  \nabla \psi 
 -\frac{\alpha}{\gamma}\Delta \psi)  \varphi \ge  \frac{\alpha_{\min} c^2}{2}  \varphi \textrm{ a.e. in } U
$$
holds for all sufficiently large $\gamma>0$. Furthermore, by virtue of $\varphi=e^{\gamma \psi}$, straightforward computations  imply that
\begin{align*}
\alpha_0=4  \alpha  |\nabla \psi|^4\varphi^3    
-\frac{2}{\gamma }\nabla\cdot(\alpha |\nabla \psi|^2\nabla \psi )\varphi^3   \geq  \alpha_{\min}c^4\varphi^3    \textrm{ a.e. in } U
\end{align*}
holds for all sufficiently large $\gamma>0$. In conclusion, \eqref{eq:claim} is valid. Let us now derive upper estimates for $|T_1|$ and $|T_2|$. Again, by the definition  $\varphi=e^{\gamma \psi}$,  
$$
\nabla (\varphi |\nabla \psi|^2) =  \nabla \varphi  |\nabla \psi|^2  + \varphi \nabla   |\nabla \psi|^2  =  \varphi \left(\gamma \nabla \psi  |\nabla \psi|^2  +  \nabla   |\nabla \psi|^2 \right).
$$
Thus,   for  all $\gamma\ge1$, Young's inequality implies  for every $\varepsilon>0$ that 
\begin{align}\label{eq:T1}
|T_1| \lesssim &  \tau\gamma\int_U \varphi|\nabla v|^2 dx 
+  \tau\gamma^3\int_U  \varphi  |\nabla v|  |v| dx 
\notag\\
\lesssim & \tau\gamma\int_U \varphi|\nabla v|^2 dx 
+\varepsilon \tau\gamma^2 \int_U\varphi |\nabla v|^2 dx
+ \frac{  \tau \gamma^4}{\epsilon} \int_U\varphi |v|^2 dx.
\end{align}
On the other hand, 
in view of   $\psi\in {C^{1,1}(\overline{U})}$, we infer  that 
\begin{align*}
|T_2| \lesssim  (\tau\gamma\int_{\p U}\varphi |\nabla v|^2 dS
+\tau^3\gamma^3\int_{\p U}\varphi^3 |v|^2 dS)
+  \tau \gamma^2\int_{\p U} |\nabla v||v|\varphi dS.
\end{align*}
  Then,   since   $\varphi\geq 1$ on $\p U$,  {Young's inequality} implies for all $\tau \ge1$ that
\begin{align}\label{eq:T2}
|T_2| \lesssim  \tau\gamma\int_{\p U}\varphi |\nabla v|^2 dS
+\tau^3\gamma^3\int_{\p U}\varphi^3 |v|^2 dS.
\end{align}
}
Now, according to \eqref{def:g}, 
\beq\label{eq:g}
\|g\|^2_U\lesssim \int_{U} e^{2\tau \varphi}|f|^2 dx+ \int_U (\tau^2\gamma^2+\tau^2\gamma^4)\varphi^2|v|^2 dx. 
\eeq
It follows therefore from \eqref{core}, \eqref{sum0}, \eqref{eq:claim},  \eqref{eq:T1} with a sufficiently small $\epsilon>0$ and \eqref{eq:g} that
$$
\int_{U}( \tau^3\gamma^4\varphi^3 |v|^2  + \tau\gamma^2\varphi |\nabla v|^2 )dx \\
{\lesssim } \int_U e^{2\tau \varphi}|f|^2 dx+\int_{\p U}
(\tau\gamma\varphi |\nabla v|^2+\tau^3\gamma^3\varphi^3|v|^2 )dS  
$$
{holds true for all sufficiently large $\gamma,\tau \ge 1$}. Substituting the identities  
\beq\label{eq:identities}
v = e^{\tau \varphi} u \quad \textrm{and} \quad \nabla v=e^{\tau\varphi}\nabla u+\tau\gamma  {\varphi e^{\tau\varphi}  u }\nabla \psi 
\eeq
into the inequality above, we obtain that 
$$
\int_{U}( \tau^3\gamma^4\varphi^3 |v|^2   +  \tau\gamma^2\varphi |\nabla v|^2  )dx \\
{ \lesssim } \int_U e^{2\tau \varphi}|f|^2 dx+\int_{\p U}
(\tau\gamma\varphi e^{2\tau\varphi} |\nabla u|^2+\tau^3\gamma^3\varphi^3e^{2\tau \varphi}|u|^2 )dS,
$$
{for all sufficiently large $\gamma,\tau \ge 1$}. 
In addition, \eqref{eq:identities} also yields
$$
\int_{U}( \tau^3\gamma^4\varphi^3 e^{2\tau \varphi} |u|^2 + \tau\gamma^2\varphi e^{2\tau \varphi} |\nabla u|^2)dx
{ \lesssim }   \int_{U}(\tau^3\gamma^4\varphi^3 e^{2\tau \varphi} |v|^2 + \tau\gamma^2\varphi |\nabla v|^2  )dx.
$$
Combining these two inequalities, we finally come to the conclusion that   \eqref{eq:Carleman} is valid. 
\end{proof}
By virtue of Lemma \ref{lemma:carleman},  the same arguments as in \cite[Propositions 2.2-2.4]{Bourgeois} yield the following result:

\begin{proposition}\label{pro:small}{}

\begin{enumerate}
\item[]
\item[\textup{($a$)}]  
Let $\omega_1,\omega_2$ be two domains such that $\omega_1\Subset \Omega$ and $\omega_2\Subset \Omega$. Then, there exist $s,c,\epsilon_0>0$ such that for all  $\epsilon\in(0,\epsilon_0]$ and $u\in H^2(\Omega)$, 
\beq\label{estimate0}
\|u\|_{1,\omega_1}\leq \f{c}{\epsilon}
(\|L_\alpha u\|_{\Omega}+\|u\|_{1,\omega_2})+\epsilon^s\|u\|_{1,\Omega}.
\eeq

\item[\textup{($b$)}] Let $x_0\in \Gamma_a$. Then, there exist a neigborhood $\omega_0$ of $x_0$, and positive constants $s,c,\epsilon_0>0$ such that for  all $\epsilon\in(0,\epsilon_0]$ and $u\in H^2(\Omega)$, 
\beq\label{estimate1}
\|u\|_{1,\Omega\cap \omega_0}\leq \f{c}{\epsilon}
( \|L_\alpha u\|_{\Omega}+\|u\|_{1,\Gamma_a}+\|\p_n u\|_{\Gamma_a})+\epsilon^s\|u\|_{1,\Omega}.
\eeq

\item[\textup{($c$)}]   Let $x^*\in \p\Omega$. Then, there exist a neigborhood $\omega$ of $x^*$ and an open domain $\omega_1\Subset \Omega$ such that   
for each $\kappa\in (0,1)$, there exist $c,\epsilon_0>0$ such that for all $\epsilon\in(0,\epsilon_0]$ and $u\in H^2(\Omega)$,
\beq\label{eq:log:1}
\|u\|_{1,\Omega\cap \omega} \leq e^{c/\epsilon}
(\|L_\alpha u\|_{\Omega}+\|u\|_{1,\omega_1})+\epsilon^\kappa\|u\|_{2,\Omega}.
\eeq

\end{enumerate}

\end{proposition}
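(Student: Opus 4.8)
The plan is to derive the three local estimates from the global Carleman inequality of Lemma~\ref{lemma:carleman} by cut-off localization, exactly along the lines of \cite[Propositions~2.2--2.4]{Bourgeois}; I only sketch the mechanism and point out where the genuine work lies. The common device is the following. Fix a point $x_0$ and a weight $\psi$ which is a smooth, strictly monotone function of $|x-x_0|^2$; then $|\nabla\psi|$ is bounded below on every annulus or half-ball avoiding $x_0$, so $\psi$ meets the hypotheses of Lemma~\ref{lemma:carleman} on the set $U$ under consideration. Pick $\chi\in C_c^\infty(\R^d)$ equal to $1$ between two level sets of $\psi$ and supported in a slightly larger shell, and apply \eqref{eq:Carleman} to $\chi u$: the only new volume term is the commutator $[L_\alpha,\chi]u$, which is of first order in $u$ and supported in the transition layer of $\chi$. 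Replacing $\varphi=e^{\gamma\psi}$ by its supremum or infimum over each of the finitely many regions involved --- the region where the estimate is sought, the transition layers of $\chi$, and, in case $(b)$, the portion of $\Gamma_a$ inside $U$ --- and dividing the resulting inequality by $e^{2\tau\varphi_*}$ with $\varphi_*$ a lower bound for $\varphi$ on the region of interest, produces an interpolation inequality of the form
\[
\|u\|_{1,\omega'}\;\lesssim\;e^{C_1\tau}\bigl(\|L_\alpha u\|_{\Omega}+\|u\|_{1,\omega''}\bigr)+e^{-C_2\tau}\,\|u\|_{1,\Omega}\qquad(\tau\ge\tau_*),
\]
and the substitution $\epsilon:=e^{-C_1\tau}$ rewrites it as $\|u\|_{1,\omega'}\le\frac{c}{\epsilon}\bigl(\|L_\alpha u\|_{\Omega}+\|u\|_{1,\omega''}\bigr)+\epsilon^{s}\|u\|_{1,\Omega}$ for $\epsilon\in(0,\epsilon_0]$, with $s=C_2/C_1>0$.

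For part $(a)$, I would realize this with three concentric balls $B_{r_1}\subset B_{r_2}\subset B_{r_3}\Subset\Omega$: take $\psi$ radial and strictly decreasing in $|x-x_0|$, and $\chi$ equal to $1$ on the shell $B_{r_2}\setminus B_{r_1}$ and supported strictly inside $B_{r_3}\setminus B_{\rho}$ with $\rho<r_1$, so that $\partial U\subset\{\chi=0\}$ and all boundary integrals in \eqref{eq:Carleman} disappear; adding $\|u\|_{1,B_{r_1}}$ to both sides gives \eqref{estimate0} for $\omega_1=B_{r_2}$, $\omega_2=B_{r_1}$. Since $\Omega$ is open and connected, the general case follows by joining $\overline{\omega_2}$ to $\overline{\omega_1}$ by a compact path inside $\Omega$, covering it by finitely many such three-ball configurations, and composing the resulting inequalities, the compounded powers of $\epsilon$ being absorbed into new constants. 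For part $(b)$, I would flatten $\Gamma_a$ near $x_0$, work in the half-ball $U=\Omega\cap B(x_0,R)$, and choose the radial weight with centre outside $\overline\Omega$ so that $\varphi$ on $\Gamma_a\cap B(x_0,R)$ exceeds its values on the region of interest and decreases into $\Omega$, keeping $\chi$ supported away from the artificial boundary $\Omega\cap\partial B(x_0,R)$ (so those terms vanish) but nontrivial up to $\Gamma_a$. The surviving boundary integral on $\Gamma_a$ in \eqref{eq:Carleman} is then controlled by $e^{2\tau\varphi_\Gamma}\bigl(\|u\|_{1,\Gamma_a}^2+\|\p_n u\|_{\Gamma_a}^2\bigr)$ --- the tangential part of $\nabla u$ being bounded by the $H^1(\Gamma_a)$-norm and the normal part being $\p_n u$ --- and since $\varphi_\Gamma>\varphi_*$, dividing by $e^{2\tau\varphi_*}$ places this term, together with $\|L_\alpha u\|_\Omega$, in front of the coefficient $c/\epsilon$; this is \eqref{estimate1}.

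Part $(c)$ is the main obstacle. Near a general boundary point $x^*$ no Cauchy data is available and the Carleman weight degenerates at $\partial\Omega$, so one cannot reach the boundary in a single step; instead one iterates the interior estimate along balls accumulating at $x^*$. Using the interior-ball condition at $x^*$ (valid since $\partial\Omega\in C^{1,1}$), I would take $z_j$ on the inward normal segment with $|z_j-x^*|=d_0\rho^{\,j}$ and $r_j\asymp d_0\rho^{\,j}$ for some $\rho\in(0,1)$, so that $B_j:=B(z_j,r_j)\Subset\Omega$ and consecutive balls overlap in a three-ball pattern; after the rescaling $x=z_j+r_jy$ the operator $L_\alpha$ becomes one with the same $C^{1,1}$-bounds (its first-order coefficient even acquiring a factor $r_j$), so the three-ball inequality from $(a)$ holds with constant and exponent independent of $j$. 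Chaining it from a fixed ball $B_0\subset\omega_1$ up to $B_N$ --- and performing the analogous construction along enough interior directions, with chain length growing like $\log\bigl(1/\mathrm{dist}(\cdot,\partial\Omega)\bigr)$, so as to cover the part of $\Omega\cap\omega$ at distance $\gtrsim\rho^{\,N}$ from $x^*$ --- yields an $\epsilon$-interpolation inequality for that part, while the remaining thin piece $\Omega\cap\omega\cap B(x^*,\rho^{\,N})$, of measure $\lesssim\rho^{\,dN}$, is estimated by $\rho^{\,N/C}\|u\|_{2,\Omega}$ via the embedding $H^2(\Omega)\hookrightarrow W^{1,p}(\Omega)$ and H\"older's inequality. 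Balancing $N$ against this tail and reparametrizing the iteration count through $\epsilon$ as before produces \eqref{eq:log:1} with $e^{c/\epsilon}$ in front of the data and $\kappa$ an arbitrary number in $(0,1)$; the loss of the endpoint $\kappa=1$ is exactly the price for summing the geometric chain of local estimates and absorbing the uncovered neighbourhood of $x^*$ into the a priori $H^2$-bound. The genuinely delicate step --- and the one most prone to error --- is making this iteration quantitative with constants truly uniform in $j$ (scale invariance of the three-ball inequality for $L_\alpha$, supplied by the rescaling above) and with a summable chain; this is carried out in detail in \cite{Bourgeois,Phung}.
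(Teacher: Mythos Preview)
Your sketch is correct and follows precisely the route the paper takes: the paper does not give its own argument but simply invokes Lemma~\ref{lemma:carleman} together with \cite[Propositions~2.2--2.4]{Bourgeois}, and what you have written is an accurate outline of those proofs (three-ball inequality plus chaining for $(a)$, half-ball Carleman with surviving $\Gamma_a$-boundary terms for $(b)$, and the geometric iteration toward the boundary of \cite{Bourgeois,Phung} for $(c)$). Nothing further is required.
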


For the upcoming results, we shall also make use of the following auxiliary lemma:

\begin{lemma}[{\cite[Lemma 2.3 and Corollary 2.1]{Bourgeois}}]\label{lemma:ineq}

\begin{enumerate}
\item[]
\item[(i)] Let $s$, $\beta$, $A$ and $B$ denote four non-negative real numbers such that $\beta\leq B$. If there exist 
$c,\epsilon_0>0$ such that for all $\epsilon\in (0,\epsilon_0]$, it holds
$$
\beta\leq \frac{c}{\epsilon}A+\epsilon^s B,
$$ 
then there exists $C$,  only depending on $s$ and $c$, such that 
$$
\beta\leq C A^{\f{s}{1+s}} B^{\f{1}{1+s}}. 
$$
\item[(ii)]  Let  $\beta$, $\delta$, $M$ denote three non-negative numbers such that $\beta \leq M$ and $\delta\leq C_0 M$ with some constant $C_0>0$.  If there exit $c,\epsilon_0,\kappa>0$ such that  for all $\epsilon\in (0,\epsilon_0]$, it holds 
$$
\beta \leq e^{c/\epsilon}\delta + \epsilon^\kappa M,
$$ 
then there exists $C$,  only depending on $c,\epsilon_0,$ and $\kappa$, such that 
$$
\beta\leq  C\frac{M}{\log(\frac{C_0 M}{\delta})^\kappa}.
$$
\end{enumerate}
\end{lemma}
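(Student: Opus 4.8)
The plan is to treat both parts of Lemma~\ref{lemma:ineq} as elementary real-variable optimization statements, and to reproduce the short arguments for completeness. In each case a whole \emph{family} of upper bounds for $\beta$ is available, indexed by $\epsilon\in(0,\epsilon_0]$, and the sharpest bound is obtained by choosing $\epsilon$ to (approximately) balance the two competing terms. The only genuine care needed is to keep the chosen $\epsilon$ inside the admissible range $(0,\epsilon_0]$ and to dispose of the degenerate cases, where the a~priori constraints $\beta\le B$ (in (i)) and $\beta\le M$ (in (ii)) take over.

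For part (i), I would first dispose of the trivial cases $A=0$ or $B=0$, in which (for $s>0$) the hypothesis forces $\beta=0$, matching the right-hand side $A^{s/(1+s)}B^{1/(1+s)}=0$. Assuming $A,B>0$, I set $\epsilon=(A/B)^{1/(1+s)}$, which is, up to an $s$-dependent factor, the minimizer of $\f{c}{\epsilon}A+\epsilon^sB$. A direct substitution gives $\f{c}{\epsilon}A=cA^{s/(1+s)}B^{1/(1+s)}$ and $\epsilon^sB=A^{s/(1+s)}B^{1/(1+s)}$, so that whenever this choice satisfies $\epsilon\le\epsilon_0$ one reads off $\beta\le(c+1)A^{s/(1+s)}B^{1/(1+s)}$. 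In the remaining case $(A/B)^{1/(1+s)}>\epsilon_0$, one has $B<\epsilon_0^{-(1+s)}A$, and then the hypothesis $\beta\le B$ yields $\beta\le B=B^{s/(1+s)}B^{1/(1+s)}\le\epsilon_0^{-s}A^{s/(1+s)}B^{1/(1+s)}$. Taking $C=\max\{c+1,\epsilon_0^{-s}\}$ finishes the proof; note the boundary case makes $C$ depend also on $\epsilon_0$, which is harmless as $\epsilon_0$ is fixed.

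For part (ii), the exponential weight makes exact balancing transcendental, so I would instead make the explicit choice $\epsilon=2c/L$ with $L:=\log(C_0M/\delta)$ (nonnegative since $\delta\le C_0M$; the case $\delta=0$ forces $\beta=0$ on letting $\epsilon\to0$). This is admissible precisely when $L\ge 2c/\epsilon_0$, i.e. $\epsilon\le\epsilon_0$; for the complementary range $0<L<2c/\epsilon_0$ the trivial bound already gives $\beta\le M\le(2c/\epsilon_0)^\kappa\,M/L^\kappa$. In the admissible range, plugging $\epsilon=2c/L$ into $e^{c/\epsilon}\delta$ collapses the exponential, since $e^{c/\epsilon}=e^{L/2}=(C_0M/\delta)^{1/2}$, whence $e^{c/\epsilon}\delta=\sqrt{C_0M\delta}$ and $\epsilon^\kappa M=(2c)^\kappa M/L^\kappa$. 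It then remains to absorb the square-root term into $M/L^\kappa$: writing $r:=\delta/M\in(0,C_0]$ and $x:=\log(C_0/r)$, one computes $\sqrt{C_0M\delta}\,L^\kappa/M=\sqrt{C_0}\,\sqrt r\,(\log(C_0/r))^\kappa=C_0\,x^\kappa e^{-x/2}$, and the elementary bound $\sup_{x\ge0}x^\kappa e^{-x/2}=(2\kappa)^\kappa e^{-\kappa}<\infty$ gives $\sqrt{C_0M\delta}\lesssim M/L^\kappa$ with a constant depending only on $C_0,\kappa$. Combining the two terms yields $\beta\le C\,M/(\log(C_0M/\delta))^\kappa$.

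The computations are routine; the part that requires attention is the bookkeeping of the admissible range of $\epsilon$ together with the degenerate/boundary cases, which is exactly where the a~priori bounds $\beta\le B$ and $\beta\le M$ are used to keep the final constant independent of $A,B,\delta,M$. The only substantive point is the verification in (ii) that the collapsed exponential term $\sqrt{C_0M\delta}$ is subdominant to $M/L^\kappa$ \emph{uniformly} in $\delta$; this rests on the boundedness of $x\mapsto x^\kappa e^{-x/2}$ on $[0,\infty)$ and is also what forces the balancing exponent $1/2$ in $\epsilon=2c/L$ — indeed, any choice $c/\epsilon=\theta L$ with $\theta\in(0,1)$ would work, since it leaves the term $C_0\,x^\kappa e^{-(1-\theta)x}$, still bounded because $1-\theta>0$.
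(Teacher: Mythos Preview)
The paper does not supply its own proof of this lemma; it is quoted directly from \cite[Lemma~2.3 and Corollary~2.1]{Bourgeois}, so there is no in-paper argument to compare against. Your balancing approach is the standard one and is correct in both parts, including the handling of the degenerate cases and the admissible range for $\epsilon$.

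One minor bookkeeping remark: in part~(i) your final constant $C=\max\{c+1,\epsilon_0^{-s}\}$ depends on $\epsilon_0$, and in part~(ii) your bound on $\sqrt{C_0M\delta}$ introduces a dependence on $C_0$. The lemma as stated in the paper asserts that $C$ depends only on $s,c$ (respectively $c,\epsilon_0,\kappa$). This is a slight imprecision in the \emph{statement} rather than a flaw in your argument---$\epsilon_0$ and $C_0$ are fixed quantities in every application, and indeed the original source \cite{Bourgeois} has the same dependence. You were right to flag it.
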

By virtue of Lemma \ref{lemma:ineq}, the estimate \eqref{estimate0} in Proposition  \ref{pro:small} (a)    
implies 
\beq\label{est0Holder}
\|u\|_{1,\omega_1}\lesssim (\|L_\alpha u\|_{\Omega}+\|u\|_{1,\omega_2})^{\frac{s}{1+s}} \|u\|_{1,\Omega}^{\f{1}{1+s}} \quad \forall\, u\in H^2(\Omega).
\eeq
Furthermore,  the estimate \eqref{eq:log:1} in Proposition  \ref{pro:small} (c) yields   
\beq\label{est1Holder}
\|u\|_{1,\Omega\cap \omega }\lesssim \frac{\|u\|_{2,\Omega}}{\log(\frac{C_0\|u\|_{2,\Omega}}{\|L_\alpha u\|_{\Omega}+\|u\|_{1,\omega_1}})^{\kappa}}
\quad \forall\, u\in H^2(\Omega), 
\eeq
for some $C_0>0$. 

 To obtain our final result, we need to reduce the $H^1$-regularity term of 
Dirichlet data  in Proposition \ref{pro:small} (b) to a term of $L^2$-regularity { by possibly enlarging the   term $\epsilon^s\|u\|_{1,\Omega}$}. 

\begin{proposition}\label{pro:boundary-boundary}
Let $x_0\in \Gamma_a$. Then, there exist  a neigborhood $\omega_0$ of $x_0$, and positive constants $s,c,\epsilon_0>0$ such that for all $\epsilon\in(0,\epsilon_0]$ and $u\in H^2(\Omega)$, 
\beq\label{estimate2}
\|u\|_{1,\Omega\cap \omega_0}\leq \f{c}{\epsilon}
( \|L_\alpha u\|_{\Omega}+\|u\|_{\Gamma_a}+\|\p_n u\|_{\Gamma_a})+\epsilon^s\|u\|_{2,\Omega}.
\eeq

\end{proposition}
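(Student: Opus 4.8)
The plan is to derive Proposition \ref{pro:boundary-boundary} directly from Proposition \ref{pro:small}(b) by replacing the $H^1(\Gamma_a)$-norm of the Dirichlet data by its $L^2(\Gamma_a)$-norm through an interpolation inequality on the boundary, paying for it with the $H^2(\Omega)$ a priori term. Concretely, I take $\omega_0$ to be the neighborhood of $x_0$ furnished by Proposition \ref{pro:small}(b), so that \eqref{estimate1} already gives, for all $\epsilon\in(0,\epsilon_0]$ and $u\in H^2(\Omega)$,
$$\|u\|_{1,\Omega\cap\omega_0}\le \f{c}{\epsilon}\bigl(\|L_\alpha u\|_{\Omega}+\|u\|_{1,\Gamma_a}+\|\p_n u\|_{\Gamma_a}\bigr)+\epsilon^{s_0}\|u\|_{1,\Omega};$$
bounding $\|u\|_{1,\Omega}\le\|u\|_{2,\Omega}$ takes care of the last term, so the only term not yet of the required form is $\tfrac{c}{\epsilon}\|u\|_{1,\Gamma_a}$.

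For that term I interpolate on $\Gamma_a$. Since $\Omega$ is of class $C^{1,1}$, the trace operator maps $H^2(\Omega)$ boundedly into $H^{3/2}(\p\Omega)$ (cf.\ \cite{Grisvard,Wloka}), and because $\Gamma_a$ is a union of connected components of $\p\Omega$, restriction to $\Gamma_a$ does not increase the norm, so $\|u\|_{3/2,\Gamma_a}\lesssim\|u\|_{2,\Omega}$. As $\Gamma_a$ is a compact $C^{1,1}$-manifold, \eqref{comp:interp} applies with $-2\le 0<3/2\le 2$ and gives $[L^2(\Gamma_a),H^{3/2}(\Gamma_a)]_{2/3}=H^1(\Gamma_a)$ with equivalent norms; hence
$$\|u\|_{1,\Gamma_a}\lesssim\|u\|_{\Gamma_a}^{1/3}\,\|u\|_{3/2,\Gamma_a}^{2/3}\lesssim\|u\|_{\Gamma_a}^{1/3}\,\|u\|_{2,\Omega}^{2/3}.$$
Young's inequality in the form $a^{1/3}b^{2/3}\le\tfrac13\mu^{-2}a+\tfrac23\mu b$, with a free parameter $\mu\in(0,1]$, then yields $\|u\|_{1,\Gamma_a}\lesssim\mu^{-2}\|u\|_{\Gamma_a}+\mu\|u\|_{2,\Omega}$, so that $\tfrac{c}{\epsilon}\|u\|_{1,\Gamma_a}\lesssim\tfrac{\mu^{-2}}{\epsilon}\|u\|_{\Gamma_a}+\tfrac{\mu}{\epsilon}\|u\|_{2,\Omega}$.

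It remains to tie the parameters together. Choosing $\mu=\epsilon^{2}$ makes $\mu/\epsilon=\epsilon$ and $\mu^{-2}/\epsilon=\epsilon^{-5}$, and, collecting all contributions and using $\epsilon^{-1}\le\epsilon^{-5}$ and $\epsilon^{s_0}\le\epsilon^{\min(1,s_0)}$ for $\epsilon\le1$, one arrives at
$$\|u\|_{1,\Omega\cap\omega_0}\le \f{C}{\epsilon^{5}}\bigl(\|L_\alpha u\|_{\Omega}+\|u\|_{\Gamma_a}+\|\p_n u\|_{\Gamma_a}\bigr)+C\epsilon^{\min(1,s_0)}\|u\|_{2,\Omega}.$$
Finally, replacing $\epsilon$ by $\epsilon^{1/5}$ (and shrinking $\epsilon_0$ accordingly) turns $\epsilon^{-5}$ into $\epsilon^{-1}$ and $\epsilon^{\min(1,s_0)}$ into $\epsilon^{\min(1,s_0)/5}$, which is exactly \eqref{estimate2} with $s:=\min(1,s_0)/5>0$. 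The argument is otherwise routine; the only points that need some care are checking that \eqref{comp:interp} is indeed applicable with the exponents used on the $C^{1,1}$-manifold $\Gamma_a$ (in particular that $\|u\|_{3/2,\Gamma_a}$ is controlled by $\|u\|_{2,\Omega}$), and tracking the powers of $\epsilon$ through the Young step and the final rescaling so that the data bracket ends up with a clean factor $c/\epsilon$ while the a priori term retains a strictly positive power of $\epsilon$.
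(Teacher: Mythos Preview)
Your proof is correct and follows essentially the same approach as the paper: start from Proposition~\ref{pro:small}(b), interpolate $\|u\|_{1,\Gamma_a}\lesssim\|u\|_{\Gamma_a}^{1/3}\|u\|_{3/2,\Gamma_a}^{2/3}$ on the $C^{1,1}$-manifold $\Gamma_a$, control $\|u\|_{3/2,\Gamma_a}$ by $\|u\|_{2,\Omega}$ via the trace theorem, split with Young's inequality, and then reparametrize $\epsilon$. The only differences are bookkeeping---the paper applies Young with the same $\epsilon$ and ends up with $\epsilon^{-4}$ (reparametrizing by $\epsilon^{1/4}$), whereas you introduce a separate parameter $\mu=\epsilon^{2}$ and get $\epsilon^{-5}$ (reparametrizing by $\epsilon^{1/5}$); both yield a strictly positive final exponent $s$.
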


\begin{proof}
Since $\Gamma_a$ itself is a compact $C^{1,1}$-manifold, we have the following well-known interpolation result 
$$
\|u\|_{1,{\Gamma_a}}\lesssim \|u\|_{3/2,\Gamma_a}^{2/3}\|u\|_{0,\Gamma_a}^{1/3} \quad \forall \, u\in H^{3/2}(\Gamma_a)
$$
(see \eqref{comp:interp} and e.g. \cite[Theorem 7.7]{Lions} ). 
Then, Young's inequality yields  for all $\epsilon>0$ that
$$
\|u\|_{1,\Gamma_a} { \lesssim}\epsilon^{3/2} \|u\|_{3/2,\Gamma_a}
+ { \epsilon^{- 3}}\|u\|_{\Gamma_a}. 
$$
Applying this inequality to Proposition \ref{pro:small} (b), we find constants $s',c',\epsilon'_0>0$ such that for all $\epsilon\in(0,\epsilon_0]$ and $u\in H^2(\Omega)$, it holds 
$$
\|u\|_{1,\Omega\cap \omega_0}\leq \f{c'}{\epsilon}
( \|L_\alpha u\|_{\Omega}+\|\p_n u\|_{\Gamma_a})+
\f{c'}{\epsilon^4}
\|u\|_{\Gamma_a}+ {c'}\epsilon^{1/2}\|u\|_{3/2,\Gamma_a}+
\epsilon^{s'}\|u\|_{1,\Omega}.
$$
Setting $s'':=\min\{1/2,s'\}$ and by the embedding result $\|u\|_{3/2,\Gamma_a}\lesssim \|u\|_{2,\Omega}$, there exists a further constant  $c''>0$ such that 
$$
\|u\|_{1,\Omega\cap \omega_0}\leq \f{c''}{\epsilon^{4}}
( \|L_\alpha u\|_{\Omega}+\|\p_n u\|_{\Gamma_a}+ 
\|u\|_{\Gamma_a})+c''\epsilon^{s''}
\|u\|_{2,\Omega},
$$
for all $0<\epsilon\leq \min\{1,\epsilon_0\}$.
By a reparametrization of $\epsilon$, i.e., replacing $\epsilon$ by {$ \hat c\epsilon^{\f{1}{4}}$} for a sufficiently large $\hat c>0$, we may find positive    constants $c,s,  \epsilon_0>0$ such that \eqref{eq:log:1} is valid. 
\end{proof}
Employing  the developed  local results,  we are now in the position to prove  global estimates.
Roughly speaking, Proposition \ref{pro:boundary-boundary}  enables us to ``transfer'' Cauchy data on $\Gamma_a$ to 
a neighborhood $\omega_0$ of every  point $x_0\in \Gamma_a$, in particular to a subdomain $\omega_2$ such that $\omega_2\Subset \omega_0$.
 Proposition \ref{pro:small} (a)  allows us to ``transfer'' data from this open domain $\omega_2$ to another small domain  $\omega_1\Subset \Omega$, in particular, to the one ``near'' to a point $x_0\in \Gamma_i$. Lastly, Proposition \ref{pro:small} (c)  ``transfer'' data on an open domain $\omega_1\Subset \Omega$ to  a neighborhood $\omega$  of  
$x_*\in \Gamma_i$ (See Fig. 2 below).  In the sequel we explain what exactly does ``transfer'' mean. 
%The analogue of Theorem \ref{the:H1:log} is presented in \cite{Bourgeois}  without proof. For sake of completeness, we give a full proof here.

\begin{theorem}\label{the:H1:log}
For every $\kappa\in (0,1)$, there exist $c>0$  and $\epsilon_0$ such that 
\beq\label{eq:final}
\|u\|_{1,\Omega}\leq e^{c/\epsilon}
( \|L_\alpha u\|_{\Omega}+\|u\|_{\Gamma_a}+\|\p_n u\|_{\Gamma_a})+\epsilon^\kappa \|u\|_{2,\Omega}
\eeq
holds true for all $\epsilon\in(0,\epsilon_0]$ and $u\in H^2(\Omega)$. 
\end{theorem}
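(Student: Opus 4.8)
The plan is to derive the global estimate \eqref{eq:final} from the three local estimates in Proposition~\ref{pro:small} (a), (c) and Proposition~\ref{pro:boundary-boundary}, by a finite covering/chaining argument. Because $\Omega$ is connected, bounded and $C^{1,1}$, the closure $\overline{\Omega}$ is compact; the strategy is to cover $\overline{\Omega}$ by finitely many open sets of three types and propagate the Cauchy data from $\Gamma_a$ through the interior to a neighborhood of every point of $\Gamma_i$.

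First I would fix $x_0 \in \Gamma_a$ and invoke Proposition~\ref{pro:boundary-boundary} to obtain a neighborhood $\omega_0$ of $x_0$ and an estimate of $\|u\|_{1,\Omega\cap\omega_0}$ in terms of the Cauchy data $\|L_\alpha u\|_\Omega + \|u\|_{\Gamma_a} + \|\p_n u\|_{\Gamma_a}$ plus an $\epsilon^s\|u\|_{2,\Omega}$ remainder; this ``seeds'' the propagation with only $L^2(\Gamma_a)$ boundary data. Next, using a connectedness argument (a chain of overlapping balls joining a point near $\Gamma_a$ to any given interior point), I would iterate Proposition~\ref{pro:small}(a): each application transfers control of $\|u\|_{1,\omega_j}$ from control of $\|u\|_{1,\omega_{j+1}}$ and $\|L_\alpha u\|_\Omega$, at the cost of a factor $c/\epsilon$ and an $\epsilon^s\|u\|_{1,\Omega}$ term. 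Composing finitely many such steps (after the standard rescaling $\epsilon \mapsto \epsilon^{\text{power}}$ and absorbing constants, with $\|u\|_{1,\Omega}\le\|u\|_{2,\Omega}$) keeps the same structural form $\frac{c}{\epsilon^N}(\text{data}) + \epsilon^{s'}\|u\|_{2,\Omega}$, and after a reparametrization in $\epsilon$ we recover exponents of the form $\frac{c}{\epsilon}(\cdot) + \epsilon^{\kappa'}\|u\|_{2,\Omega}$; then Proposition~\ref{pro:small}(c) is applied at each point $x_* \in \Gamma_i$ to pass from an interior domain $\omega_1 \Subset \Omega$ to a neighborhood $\omega$ of $x_*$ in $\Omega$, producing the $e^{c/\epsilon}$ prefactor together with the $\epsilon^\kappa\|u\|_{2,\Omega}$ remainder.

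To assemble the global bound, I would choose a finite subcover $\{\Omega\cap\omega_0^{(p)}\} \cup \{\omega_1^{(l)}\} \cup \{\Omega\cap\omega^{(m)}\}$ of $\overline{\Omega}$ (possible by compactness, where the interior pieces come from step~(a) and the boundary pieces near $\Gamma_a$, resp. $\Gamma_i$, from Proposition~\ref{pro:boundary-boundary}, resp. Proposition~\ref{pro:small}(c)), take a subordinate partition of unity, and bound $\|u\|_{1,\Omega}$ by the finite sum of the local norms. Each local norm is dominated, via the chaining above, by $e^{c/\epsilon}(\|L_\alpha u\|_\Omega + \|u\|_{\Gamma_a} + \|\p_n u\|_{\Gamma_a}) + \epsilon^\kappa \|u\|_{2,\Omega}$ for a common $c$ and all small $\epsilon$; summing a finite number of these and renaming constants gives \eqref{eq:final}. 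The bookkeeping of the $\epsilon$-powers under composition and the need to re-scale $\epsilon$ after each stage so that the worst exponent degrades in a controlled way, while matching the hypothesis $\kappa \in (0,1)$ required by Proposition~\ref{pro:small}(c), can be handled exactly as in \cite[Theorems 2.2--2.3]{Bourgeois}.

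\textbf{Main obstacle.} The delicate point is the chaining in the interior: the exponent $s$ in the H\"older-type loss $\epsilon^s\|u\|_{1,\Omega}$ from Proposition~\ref{pro:small}(a) is not preserved under composition (each step multiplies the ``good'' factor by $c/\epsilon$ and shrinks the admissible exponent), so one must verify that after a fixed finite number of steps the net remainder exponent is still positive and that, after the final reparametrization of $\epsilon$, it can be pushed to any prescribed $\kappa<1$ to match the logarithmic estimate at $\Gamma_i$. Keeping the constant $c$ uniform over the finitely many chains and over the partition of unity, and ensuring the bound $\|u\|_{1,\Omega}\le\|u\|_{2,\Omega}$ is used only to close the absorption and not circularly, is the part that requires care; everything else is a routine finite covering argument, and Theorem~\ref{continuity} then follows immediately by applying Lemma~\ref{lemma:ineq}(ii) to \eqref{eq:final} with $L_\alpha u = 0$ and $\p_n u = -\alpha^{-1}ku$ on $\Gamma_a$.
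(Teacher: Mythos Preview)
Your overall strategy---seed from $\Gamma_a$ via Proposition~\ref{pro:boundary-boundary}, propagate through the interior via Proposition~\ref{pro:small}(a), reach $\Gamma_i$ via Proposition~\ref{pro:small}(c), then patch by compactness---is exactly the paper's. Two points of comparison are worth noting.

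First, you propose to iterate Proposition~\ref{pro:small}(a) along a chain of overlapping balls. This is unnecessary: as stated, Proposition~\ref{pro:small}(a) already connects \emph{any} two compactly contained subdomains $\omega_1,\omega_2\Subset\Omega$ in a single step (the three-ball/propagation argument is hidden inside its proof). The paper therefore applies it exactly once, linking a ball $\omega_2\Subset\omega_0\cap\Omega$ near $\Gamma_a$ directly to the interior domain $\omega_1$ produced by Proposition~\ref{pro:small}(c).

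Second, and more substantively, the paper does not compose the local estimates in their $\epsilon$-form and then fight the exponent degradation you flag as the main obstacle. Instead it first converts the estimates of Proposition~\ref{pro:small}(a) and Proposition~\ref{pro:boundary-boundary} into H\"older form via Lemma~\ref{lemma:ineq}(i), obtaining
\[
\|u\|_{1,\omega_1}\lesssim(\text{data}+\|u\|_{1,\omega_2})^{\theta}\|u\|_{1,\Omega}^{1-\theta},\qquad
\|u\|_{1,\Omega\cap\omega_0}\lesssim(\text{data})^{\theta'}\|u\|_{2,\Omega}^{1-\theta'}.
\]
H\"older estimates compose cleanly (the exponents multiply), giving $\|u\|_{1,\omega_1}\lesssim(\text{data})^{\theta\theta'}\|u\|_{2,\Omega}^{1-\theta\theta'}$. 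This is then substituted into the $e^{c/\epsilon}$-estimate from Proposition~\ref{pro:small}(c), and a single application of Young's inequality returns the result to the form \eqref{eq:final}. This bypasses the bookkeeping of repeated $\epsilon$-rescalings entirely; your route would work, but the H\"older detour is what makes the argument short.
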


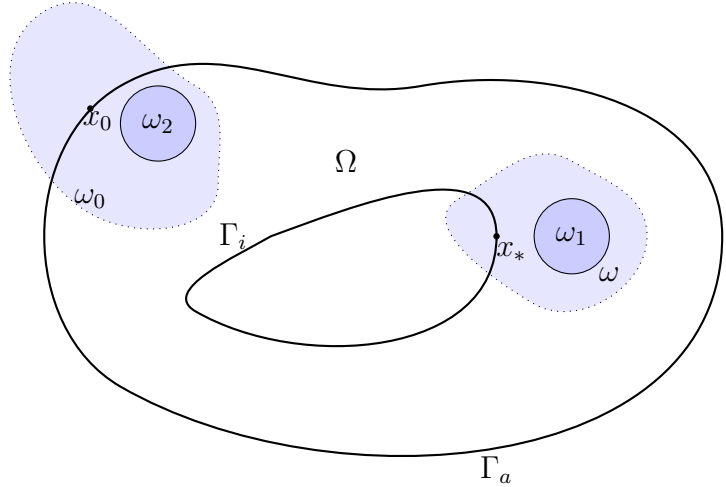
\begin{figure}[h!]\label{Fig:proof}
  \begin{center}
  \begin{tikzpicture}
   \filldraw[fill=blue!10] [dotted](-3.0,3.0)%draw and fill omega0 
                     to [out=30,in=150] (-1.0,2.0) %out means the angle of in curve from the previous 
                      to [out=-30,in=90] (-0.7,0.7) 
               to [out=-90,in=-30] (-2.4,0.3)
             to  [out=150,in=-150]  (-3.0,3.0);
\filldraw[fill=blue!10] [dotted](2.5,0.5)%draw and fill omega  
                     to [out=30,in=150] (4,1) %out means the angle of in curve from the previous 
                      to [out=-30,in=90] (5,0)
                            to [out=-90,in=-30]  (3.5,-0.9)
             to  [out=150,in=-150]  (2.5,0.5);
      \filldraw  (4.0,0) [fill=blue!20]  circle (0.5cm); %omega1      
     \filldraw (-1.5,1.5) [fill=blue!20]  circle (0.5cm);%omega2
   \draw [thick] (-2,2) % Draws a line
      to [out=30,in=190] (2,2) % out, put is the angle control
      to [out=10,in=90] (6,0) 
      to [out=-90,in=-30] (-2,-2)
      to  [out=150,in=-150] (-2,2);
      \draw [thick] (0,0) % Draws a line
      to [out=30,in=180] (0,0) % out, put is the angle control
      to [out=20,in=90] (3,0) 
      to [out=-90,in=-30] (-1,-1)
      to  [out=150,in=-150] (0,0);
       \draw (1,1) node {$\Omega$};% add text and symbols
            \draw (-0.5,0) node {$\Gamma_i$};  
            \draw (3,-3.1) node {$\Gamma_a$};
%            \draw[dotted]   (3.0,0) circle (2.0cm);
            \filldraw (3.0,0) circle(1pt); 
            \draw (3.2,-0.2) node {$x_*$}; %xstar
%              \draw[dotted] (-2.4,1.7) circle (2.1cm);
               \draw (-2.3,1.54) node {$x_0$}; %x0
               \filldraw (-2.4,1.7) circle (1pt); %x0
                    \draw (-1.5,1.5) node {$\omega_2$};
                        \draw (-2.4,0.5) node {$\omega_0$};
                        \draw (4,0) node {$\omega_1$};
                         \draw (4.5,-0.5) node {$\omega$};
%                       \draw [->] (-2.4,1.7) -- (-3.81,3.14);                          \draw  (-3.1,2.25)  node {$r$}; 

                           \end{tikzpicture}
  \caption{Proof of  Theorem \ref{the:H1:log}}
  \end{center}
\end{figure}

\begin{proof} 
Let $\kappa\in (0,1)$. We first study  the local boundary estimate over $\Gamma_i$. Let  $x_*\in \Gamma_i$ be arbitrarily fixed.  Proposition \ref{pro:small}  (c) implies  the existence of a neigborhood $\omega$ of $x^*$, an open domain $\omega_1\Subset \Omega$  and positive constants  $c',\epsilon'_0>0$  such that
\beq\label{eq:log00}
\|u\|_{1,\Omega\cap \omega} \leq e^{c/\epsilon}
(\|L_\alpha u\|_{\Omega}+\|u\|_{1,\omega_1})+\epsilon^\kappa\|u\|_{2,\Omega}
\eeq
holds for all $\epsilon\in(0,\epsilon'_0]$ and $u\in H^2(\Omega)$. Next, let    $x_0\in \Gamma_a$ be arbitrarily fixed. Proposition 
\ref{pro:boundary-boundary}   and Lemma \ref{lemma:ineq} (i) imply the existence of  a neighborhood $\omega_0$  of $x_0$ and a constant
$\theta'\in (0,1)$ such that  
\beq\label{eq:Holder01}
\|u\|_{1,\Omega\cap \omega_0}\lesssim (\|L_\alpha u\|_\Omega+\|u\|_{\Gamma_a}+\|\p_n u\|_{\Gamma_a})^{\theta'}\|u\|_{2,\Omega}^{1-\theta'} \quad \forall\, u\in H^2(\Omega).  
\eeq   
{
Our goal now is to prove  that the estimation  \eqref{eq:log00} remains true with $\|u\|_{1,\omega_1}$ replaced by the Dirichlet data on $\Gamma_a$. To this aim, let us 
 select a domain $\omega_2$ such that $\omega_2\Subset \omega_0\cap \Omega$ (see Fig. 2).  }
By 
Proposition \ref{pro:small} (a) and Lemma \ref{lemma:ineq} (i),  it holds for some $\theta\in (0,1)$  that 
\beq\label{eq:Holder00}
\|u\|_{1,\omega_1}\lesssim (\|L_\alpha u\|_\Omega+\|u\|_{1,\omega_2})^{\theta}\|u\|_{1,\Omega}^{1-\theta} \quad \forall\, u\in H^2(\Omega). 
\eeq
  Since $\omega_2\Subset  \Omega\cap \omega_0$, it follows from   \eqref{eq:Holder01}-\eqref{eq:Holder00}  and $\|L_\alpha u\|_{\Omega}\lesssim \|u\|_{2,\Omega}$
that 
\begin{align*}
\|u\|_{1,\omega_1}\lesssim& (\| L_\alpha u\|_\Omega+(\|L_\alpha u\|_\Omega+\|u\|_{\Gamma_a}+\|\p_n u\|_{\Gamma_a})^{\theta'}\|u\|_{2,\Omega}^{1-\theta'} )^{\theta}\|u\|_{1,\Omega}^{1-\theta}\\
\lesssim& (\|L_\alpha u\|_\Omega^{\theta'}\|u\|_{2,\Omega}^{1-\theta'}+(\|L_\alpha u\|_\Omega+\|u\|_{\Gamma_a}+\|\p_n u\|_{\Gamma_a})^{\theta'}\|u\|_{2,\Omega}^{1-\theta'} )^{\theta}\|u\|_{1,\Omega}^{1-\theta} \quad \forall\, u\in H^2(\Omega).
\end{align*}
Then, by an elementary inequality $x^{\theta'}
+y^{\theta'}\leq 2^{1-\theta'}(x+y)^{\theta'}$ for all $x,y\geq 0$,    we obtain
$$
\|u\|_{1,\omega_1}\lesssim (
\|L_\alpha u\|_\Omega+\|u\|_{\Gamma_a}+\|\p_n u\|_{\Gamma_a})^{\theta'\theta} \|u\|_{2,\Omega}^{(1-\theta')\theta}\|u\|_{1,\Omega}^{1-\theta}   \quad \forall\, u\in H^2(\Omega),
$$
and hence 
\beq \label{eq:log01}
\|u\|_{1,\omega_1}\lesssim (
\|L_\alpha u\|_\Omega+\|u\|_{\Gamma_a}+\|\p_n u\|_{\Gamma_a})^{\theta'\theta} \|u\|_{2,\Omega}^{1-\theta'\theta} \quad \forall\, u\in H^2(\Omega). 
\eeq
Combining \eqref{eq:log01} and \eqref{eq:log00} together,  we obtain for all $\epsilon\in(0,\epsilon'_0]$ and $u\in H^2(\Omega)$  that 
$$
\|u\|_{1,\Omega\cap \omega} \leq e^{c'/\epsilon}
(\|L_\alpha u\|_{\Omega}+
(\|L_\alpha u\|_\Omega+\|u\|_{\Gamma_a}+\|\p_n u\|_{\Gamma_a})^{s} \|u\|_{2,\Omega}^{1-s} )+\epsilon^\kappa\|u\|_{2,\Omega},
$$ 
with $s=\theta'\theta$, from which it follows that 
\beq\label{final0}
\|u\|_{1,\Omega\cap \omega} \lesssim  e^{c'/\epsilon}
(\|L_\alpha u\|_\Omega+\|u\|_{\Gamma_a}+\|\p_n u\|_{\Gamma_a})^{s} \|u\|_{2,\Omega}^{1-s} +\epsilon^\kappa\|u\|_{2,\Omega}. 
\eeq
Furthermore, Young's inequality implies
$$
e^{c'/\epsilon}
(\|L_\alpha u\|_\Omega+\|u\|_{\Gamma_a}+\|\p_n u\|_{\Gamma_a})^{s} \|u\|_{2,\Omega}^{1-s} \leq 
\frac{e^{\f{c'}{s\epsilon}}}{\epsilon^{\f{\kappa(1-s)}{s}}} (\|L_\alpha u\|_\Omega+\|u\|_{\Gamma_a}+\|\p_n u\|_{\Gamma_a})+\epsilon^\kappa\|u\|_{2,\Omega}.
$$
{Applying this inequality to \eqref{final0} and
  choosing a sufficiently large $c>0$ and a sufficiently small $\epsilon_0>0$,  we obtain the desired estimate}
\beq\label{eq:final2}
\|u\|_{1,\Omega\cap \omega}  \leq e^{c/\epsilon}
( \|L_\alpha u\|_{L^2}+\|u\|_{\Gamma_a}+\|\p_n u\|_{\Gamma_a})+\epsilon^\kappa \|u\|_{2,\Omega},
\eeq
for all $\epsilon\in (0,\epsilon_0]$ and all $u\in H^2(\Omega)$.

  For the case $x_0$ being a point on  $\Gamma_a$, we can readily prove a better estimate.  Indeed,   using  \eqref{eq:Holder01} 
and Young's inequality again, we know that there exist some $c>0$ such that  for all $u\in H^2(\Omega)$,  
$$
\|u\|_{1,\Omega\cap \omega_0}  \lesssim  \frac{c}{\epsilon^{\f{\kappa(1-{\theta'})}{\theta'} }}
( \|L_\alpha u\|_{L^2}+\|u\|_{\Gamma_a}+\|\p_n u\|_{\Gamma_a})+\epsilon^\kappa \|u\|_{2,\Omega}. 
$$ 
This estimate is still true if we replace the term $ \frac{c}{\epsilon^{\f{\kappa(1-\theta')}{\theta'} }}
$ by $e^{\f{c}{\epsilon}}$,  provided $\epsilon$ is sufficiently small enough.   Therefore, the estimate \eqref{eq:final2} holds true with 
$\|u\|_{1,\Omega\cap \omega} $ replaced by $\|u\|_{1,\Omega\cap \omega_0} $.  Of course, we may need to choose another constants $c$ and $\epsilon_0$ if necessarily. In the same manner, for every $\omega' \Subset \Omega$,  one can show that  estimate \eqref{eq:final2} is still valid  with 
$\|u\|_{1,\Omega\cap \omega} $ replaced  by {$\|u\|_{1,\omega'}$}. 
 Patching together all local estimates, we conclude from { the compactness of $\overline \Omega$}  that  \eqref{eq:final} is valid.
\end{proof}
We close this section by proving Theorem \ref{continuity}:
\begin{proof}[Proof of Theorem \ref{continuity}] {Let $u \in H^2(\Omega)$ be a solution of \eqref{eq:homo}  within $\mathcal{M}_M$ and $\kappa\in (0,1)$. }
Since $L_\alpha u=0$,   Theorem  \ref{the:H1:log}  and Lemma \ref{lemma:ineq} (ii)   imply
$$
\|u\|_{1,\Omega}\leq \frac{C\|u\|_{2,\Omega}}{
\log(\frac{C_0 \|u\|_{2,\Omega}}{  \|u\|_{\Gamma_a}+\|\p_n u\|_{\Gamma_a})})^\kappa}
$$
for some positive constants $C$ and $C_0$, independent of $u$.  Since the mapping 
$y\mapsto \frac{y}{(\ln (y/y_0))^\kappa}$ is increasing over 
$(y_0,+\infty)$ and $\|u\|_{2,\Omega} \le M$, it holds that  
\beq\label{eq1}
\|u\|_{H^1(\Omega)}\leq C\frac{M}{
\left(\ln{(C_0\frac{M}{  \|u\|_{\Gamma_a}+\|\p_n u\|_{\Gamma_a}}} )\right)^\kappa}
\eeq
Since   
$\|u\|_{\Gamma_a}\lesssim \|\p _n u\|_{\Gamma_a}$,  
we conclude (by changing $C$ and $C_0$ if necessary) that Theorem \ref{continuity} is valid. 
\end{proof}

\section{Proof of Theorem \ref{the:main}}

To prove  Theorem \ref{the:main}, we apply Lemma \ref{lemma:vsc}  by constructing   suitable orthogonal projections on $L^2(\Gamma_i)$. Our proof is based on  the   conditional stability estimate (Theorem \ref{continuity}) along with  the complex interpolation theory and the  following  Gelfand triple:

\begin{enumerate}

\item[(G1)]   $H^{1/2}(\Gamma_i)\subset L^2(\Gamma_i)\subset H^{-1/2}(\Gamma_i)$ with dense and continuous embeddings;

\item[(G2)] $\{H^{1/2}(\Gamma_i), H^{-1/2}(\Gamma_i)\}$  forms an adjoint pair with the duality product $\langle\cdot,\cdot \rangle_{H^{-1/2}(\Gamma_i),H^{1/2}(\Gamma_i)} $; 

\item[(G3)] the duality pairing $\langle\cdot,\cdot \rangle_{H^{-1/2}(\Gamma_i),H^{1/2}(\Gamma_i)} : H^{-1/2}(\Gamma_i) \times H^{1/2}(\Gamma_i) \to \mathbb C$ satisfies 
$$
\langle v,u\rangle_{H^{-1/2}(\Gamma_i),H^{1/2}(\Gamma_i)}=(v,u)_{L^2(\Gamma_i)} \quad \forall\, u\in  H^{1/2}(\Gamma_i),\,\,v\in L^2(\Gamma_i). 
$$

\end{enumerate}
Since the inner-product $(\cdot,\cdot)_{H^{1/2}(\Gamma_i)}$ is a symmetric sesquilinear  form over $H^{1/2}(\Gamma_i)$,  the operator $\B: H^{1/2}(\Gamma_i)  \to H^{-1/2}(\Gamma_i)$ defined by 
$$
\langle \B  u, v \rangle_{H^{-1/2}(\Gamma_i),H^{1/2}(\Gamma_i)}:= (u,v)_{H^{1/2}(\Gamma_i)} 
\q \forall\, u,v\in  H^{1/2}(\Gamma_i) 
$$
is linear and bounded.  We can then define an unbounded operator  $\A: D(\A)\subset L^2(\Gamma_i)\to L^2(\Gamma_i)$ as follows: 
$$
\A u:=\B u \quad \forall\, u\in D(\A),
$$
with the domain 
$$
D(\A)=\{u\in H^{1/2}(\Gamma_i) \, | \,Ê\B u\in L^2(\Gamma_i) \}. 
$$
One can infer that $\A: D(\A)\subset L^2(\Gamma_i)\to L^2(\Gamma_i)$ is a densely defined and closed operator (cf. \cite[Theorem 1.25]{Yagibook}). Further properties of this operator is summarized in the following lemma:    

\begin{lemma}[{\cite[Chapter 1, Section 8]{Yagibook}}]
The operator $\A:D(\A)\subset L^2(\Gamma_i) \to  L^2(\Gamma_i)$ is   densely defined, closed, self-adjoint and $m$-accretive. Furthermore, it satisfies
\beq\label{eq:inner0}
(\A u,v)_{\Gamma_i}=(u,v)_{H^{1/2}(\Gamma_i)}\quad \forall \, u,v\in  D(\A).
\eeq
\end{lemma}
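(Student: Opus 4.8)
The plan is to read off every assertion from the Lax--Milgram theorem applied to the bounded, Hermitian and coercive sesquilinear form $a(u,v):=(u,v)_{H^{1/2}(\Gamma_i)}$ on $H^{1/2}(\Gamma_i)$, for which $a(u,u)=\|u\|_{1/2,\Gamma_i}^2$. First I would record \eqref{eq:inner0}: for $u,v\in D(\A)$ we have $\A u=\B u\in L^2(\Gamma_i)$, so (G3) together with the definition of $\B$ gives
$$
(\A u,v)_{\Gamma_i}=\la \B u,v\ra_{H^{-1/2}(\Gamma_i),H^{1/2}(\Gamma_i)}=(u,v)_{H^{1/2}(\Gamma_i)}.
$$
Taking complex conjugates and using that $(\cdot,\cdot)_{H^{1/2}(\Gamma_i)}$ is Hermitian then yields $(\A u,v)_{\Gamma_i}=(u,\A v)_{\Gamma_i}$, so that $\A$ is symmetric; in particular $\Re(\A u,u)_{\Gamma_i}=\|u\|_{1/2,\Gamma_i}^2\ge 0$, i.e.\ $\A$ is accretive.

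Next I would fix $\lambda>0$ and $f\in L^2(\Gamma_i)$ and consider the form $a_\lambda(u,v):=a(u,v)+\lambda(u,v)_{\Gamma_i}$, which is bounded and coercive on $H^{1/2}(\Gamma_i)$ (coercivity constant $\ge 1$), while $v\mapsto(f,v)_{\Gamma_i}$ is continuous on $H^{1/2}(\Gamma_i)$ by the embedding in (G1). Lax--Milgram provides a unique $u_\lambda\in H^{1/2}(\Gamma_i)$ with $a(u_\lambda,v)=(f-\lambda u_\lambda,v)_{\Gamma_i}$ for all $v\in H^{1/2}(\Gamma_i)$; invoking (G3) once more, this reads $\B u_\lambda=f-\lambda u_\lambda$ in $H^{-1/2}(\Gamma_i)$, and since the right-hand side belongs to $L^2(\Gamma_i)$ we conclude $u_\lambda\in D(\A)$ and $(\A+\lambda I)u_\lambda=f$. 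Testing with $v=u_\lambda$ gives $\lambda\|u_\lambda\|_{\Gamma_i}^2\le(f,u_\lambda)_{\Gamma_i}\le\|f\|_{\Gamma_i}\|u_\lambda\|_{\Gamma_i}$, and uniqueness is immediate from coercivity, so $\A+\lambda I$ maps $D(\A)$ bijectively onto $L^2(\Gamma_i)$ with $\|(\A+\lambda I)^{-1}\|\le 1/\lambda$; together with accretivity this is exactly $m$-accretivity. Density of $D(\A)$ then follows because any $w\in L^2(\Gamma_i)$ orthogonal to $D(\A)$ satisfies, for $u:=(\A+I)^{-1}w\in D(\A)$, the identity $0=(w,u)_{\Gamma_i}=((\A+I)u,u)_{\Gamma_i}=\|u\|_{1/2,\Gamma_i}^2+\|u\|_{\Gamma_i}^2$, hence $u=0$ and $w=0$; closedness follows since $\A+I$ is the inverse of the everywhere-defined bounded operator $(\A+I)^{-1}$ (these two facts are also precisely \cite[Theorem 1.25]{Yagibook}).

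Finally I would upgrade symmetry to self-adjointness: given $v\in D(\A^*)$, set $w:=\A^*v\in L^2(\Gamma_i)$ and choose $\tilde v\in D(\A)$ with $(\A+I)\tilde v=w+v$, which is possible since $\A+I$ is onto; then for every $u\in D(\A)$,
$$
((\A+I)u,v)_{\Gamma_i}=(\A u,v)_{\Gamma_i}+(u,v)_{\Gamma_i}=(u,w+v)_{\Gamma_i}=(u,(\A+I)\tilde v)_{\Gamma_i}=((\A+I)u,\tilde v)_{\Gamma_i},
$$
the last step using symmetry of $\A$, and surjectivity of $\A+I$ forces $v=\tilde v\in D(\A)$, whence $D(\A^*)\subseteq D(\A)$ and $\A=\A^*$. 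I expect the only genuine subtlety to be the consistent bookkeeping of the Gelfand-triple identifications --- systematically using (G3) to pass between the $H^{-1/2}(\Gamma_i)$--$H^{1/2}(\Gamma_i)$ duality pairing and the $L^2(\Gamma_i)$ inner product --- together with the observation that self-adjointness genuinely needs the surjectivity of $\A+I$ and not merely the symmetry of $\A$; the remaining computations are routine Lax--Milgram manipulations.
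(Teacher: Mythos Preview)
The paper does not supply its own proof of this lemma; it is simply quoted from \cite[Chapter~1, Section~8]{Yagibook}. Your argument is correct and is precisely the standard variational route (Lax--Milgram on the coercive Hermitian form $a(u,v)=(u,v)_{H^{1/2}(\Gamma_i)}$, then symmetric $+$ surjective $\Rightarrow$ self-adjoint) that underlies the cited result, so there is nothing to compare against and nothing to fix.
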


By (\ref{eq:inner0}), we obtain that 
$$
(\A u,u)_{\Gamma_i}=\|u\|_{1/2,\Gamma_i}^2\geq \|u\|_{0,\Gamma_i}^2 \quad \forall u\in D(\A).
$$
Then, in view of the compactness of the embedding $D(\A)\subset  L^2(\Gamma_i)$, we  infer that there exists a complete orthonormal basis $\{ e_n\}_{n=1}^\infty \subset L^2(\Gamma_i)$ such that 
\beq
(\A u,u)_{\Gamma_i}=\sum_{n=1}^\infty \lambda_n |(u,e_n)_{\Gamma_i}|^2  \quad \forall\, u\in D(\A),
\eeq
where $1\leq\lambda_{1}\leq \lambda_2 \leq \cdots$, $\lim_{n\to\infty} \lambda_n=+\infty$, and, for every $n\in \mathbb{N}^+$,  $e_n $ is the eigenfunction of $\A$ for the eigenvalue of $\lambda_n$, i.e., 
$$
\A e_n=\lambda_n  e_n \quad \forall n\in \mathbb N.
$$
For every $s\in \R$, the fractional power $\A^s$ of $\A$ can be defined as
\beq\label{def:As}
\A^s u:=\sum_{n=1}^\infty \lambda^s_n  (u, e_n)_{\Gamma_i} e_n \q \forall\, u\in D(\A^s),
\eeq
where the domain $D(\A^s)$ is given by 
\beq\label{eq:Bs}
D(\A^s)=\{ u\in L^2(\Gamma_i)\mid \,\, \sum_{n=1}^\infty \lambda_n^{2s} |(u,e_n)_{\Gamma_i}|^2 <\infty\}. 
\eeq
Then, for each $s\geq 0$, $\A^s: D(\A^s)\subset L^2(\Gamma_i) \to   L^2(\Gamma_i)$ is also self-adjoint, and $D(\A^s)$ is a Banach space equipped with the norm 
\beq\label{eq:Asnorm}
\| u \|_{D(\A^s)}:=\|\A^s u\|_{L^2(\Gamma_i)}=\left(\sum_{n=1}^\infty \lambda_n^{2s}|(u,e_n)_{\Gamma_i}|^2\right)^{1/2} \quad \forall u\in D(\A^s), 
\eeq
which is also equivalent to the corresponding graph norm of $(\A^s,D(\A^s))$ (for more details, we refer to \cite{Reed,Wloka}). Let us mention that for all $\theta\in [0,1/2]$, it holds that 
\beq\label{eq:normeq0}
D(\A^{\theta})=[L^2(\Gamma_i), H^{1/2}(\Gamma_i)]_{2\theta}=H^{\theta}(\Gamma_i) 
\eeq
 with norm equivalence. The first identity is from {\cite[Corollary 2.4]{Yagibook}}, while the second  one is due to \eqref{comp:interp}.

\begin{proof}[Proof of Theorem \ref{the:main}]
{
If  $q^\dag\neq 0$, then  \eqref{vscA} holds true for every  concave index function $\Psi$.}  Let therefore $0 \neq q \in H^{s}(\Gamma_i)$ for some $s\in  (0,1/2]$.  Let us introduce  a  family of $\{P_\lambda\}_{\lambda\geq \lambda_1}$ in $L^2(\Gamma_i)$, where $P_\lambda: L^2(\Gamma_i)\to L^2(\Gamma_i)$ is define by  
$$
P_\lambda q=\sum_{\lambda_n\leq \lambda} (q,e_n)_{\Gamma_i} e_n. 
$$
 Since $q^\dag\in H^{s}(\Gamma_i)$, it follows from \eqref{eq:normeq0}  that $q^\dag\in D(\A^s)$. Then, using \eqref{eq:Asnorm}, we have 
\begin{align}\label{decay00}
\|(I-P_\lambda)q^\dag\|^2_{0,\Gamma_i}=&
\sum_{\lambda_n>\lambda}  |(q^\dag,e_n)_{\Gamma_i}|^2\notag\\
\leq& \sum_{\lambda_n>\lambda} \f{\lambda_n^{2s}}{\lambda^{2s}} |(q^\dag,e_n)_{\Gamma_i}|^2 
\leq  
 \f{1}{\lambda^{2s}} \|q^\dag\|_{D(\A^s)}^2. 
\end{align}
Thus, it remain to estimate the inner product $( q^\dag,q^\dag-q)_{0,\Gamma_i}$ for every $ q\in \mathcal U_{q^\dag}$. To this aim, let $ q\in \mathcal U_{q^\dag}$. As $q^\dag\in D(\A^{s})$, there exits $\omega:=\A^s q^\dag \in L^2(\Gamma)$ such $q^\dag=\A^{-s}\omega$. 
From the definition of $\A^{ s}$, it follows that 
\begin{align}\label{innerproduct00}
\Re(q^\dag,P_\lambda(q^\dag -q))_{\Gamma_i}=\Re(\omega,\A^{-s} P_\lambda (q^\dag -q))_{\Gamma_i}. 
\end{align}
Since $\|q^\dag-q\|_{\Gamma_i}
=\sum_{n=1}^\infty  |(q^\dag-q,e_n)_{\Gamma_i}|^2$, we have 

\beq\label{eq:important}
\begin{array}{ll}
\displaystyle \|\A^{- s}P_\lambda (q^\dag-q)\|^2_{0,\Gamma_i}&=\sum_{\lambda_n\leq \lambda}
\lambda^{-2s}_n |(q^\dag-q,e_n)_{\Gamma_i}|^2 
\end{array}
\eeq
Since $H^{1/2}(\Gamma_i)$ is dense in $L^2(\Gamma_i)$ and { by (G3)} , we have 
\beq\label{eq:important1}
|(q^\dag-q,e_n)_{\Gamma_i}|\leq \|q^\dag-q\|_{-\frac{1}{2},\Gamma_i}\|e_{n}\|_{\f1 2,\Gamma_i} 
\lesssim \sqrt{\lambda _n}  \|q^\dag-q\|_{-\frac{1}{2},\Gamma_i},
\eeq
where we have uses   \eqref{eq:Asnorm}.  Combining \eqref{eq:important} and \eqref{eq:important1} yields 
\beq\label{eq:important20}
 \|\A^{- s}P_\lambda (q^\dag-q)\|^2_{0,\Gamma_i}
 \lesssim \lambda^{1-2s}  { \|q^\dag-q\|_{-\frac{1}{2},\Gamma_i}^2}. 
\eeq
According to the definition of $\mathcal{U}_{q^\dag}$,  it holds that  $q-q^\dag\in H^{1/2}(\Gamma_i)$ with $\|q-q^\dag\|_{1/2,\Gamma_i} \le M_0$. Thus, by  a classical elliptic regularity result (cf. \cite{Grisvard,Lions}), there exists a constant $M>0$ such that $u:=S(q)-S(q^\dag)\in \mathcal{M}_M$; see \eqref{eq:M} for the definition of $\mathcal{M}_M$. We note that the constant $M$ may depend on $M_0$, but not on $q \in \mathcal{U}_{q^\dag}$. Furthermore, by definition, $u=S(q)-S(q^\dag) \in H^2(\Omega)$ satisfies    
$$
\begin{aligned}
\langle q^\dag -q, \text{tr}(v)\rangle_{H^{-1/2}(\Gamma_i),H^{1/2}(\Gamma_i)} = \int_{\Gamma_i} (q^\dag -q ) \text{tr}(\overline v)  d S  = \int_{\Omega} \alpha \nabla u \cdot \nabla \overline v \, dx  +\int_{\Gamma_a}k \text{tr}(u)\text{tr}(\overline{v}) dS \\
=  \int_{\Gamma_i}  \text{tr}( \nabla u \cdot n)  \text{tr}(\alpha \overline v) d S  \quad \forall  v \in H^1(\Omega) \textrm{ satisfying }  \text{tr}(  v) = 0 \textrm{ on } \Gamma_a\\
\end{aligned}
$$
 Thus, as  $\text{tr} : H^1 (\Omega) \to H^{1/2}(\partial \Omega)$ is surjective, we deduce  from $\Gamma_a \cap \Gamma_i = \emptyset$,   the boundedness of     $H^1(\Omega)\ni u\mapsto  \p_n u \in   H^{-1/2}(\Gamma_i)$ and the estimate 
 $$
 \|\alpha v\|_{\f{1}{2},\Gamma_i}\lesssim \|\alpha\|^{\f{1}{2}}_{C^1(\Gamma_i)} \|\alpha\|^{\f{1}{2}}_{C(\Gamma_i)} \|v\|_{\f{1}{2},\Gamma_i}\quad \forall\, v\in H^{\f{1}{2}}(\Gamma_i)
 $$ (see e.g. \cite[Page 49]{Yagibook})
 that 
$$
\|q^\dag-q\|_{-\frac{1}{2},\Gamma_i} \lesssim \|\alpha\|_{C^1(\Gamma_i)} \|\partial_n u\|_{-\frac{1}{2},\Gamma_i} \le C_n \|\alpha\|_{C^1(\Gamma_i)}  \|u\|_{1,\Omega},
$$
with $C_n>0$, independent of $u, \, q$ and $q^\dag$.  By this inequality and since $u \in \mathcal{M}_M$ satisfies    \eqref{eq:homo}, Theorem \ref{continuity} and \eqref{eq:important20}  imply  the existence of positive constants $C,C_0>0$, independent of $q$ and $q^\dag$ such that
$$
\|\A^{-s} P_\lambda (q^\dag-q)\|_{0,\Gamma_i} \leq      \lambda^{1/2-s}  \|\alpha\|_{C^1(\Gamma_i)}  C_n  \f{C M}{\log(\frac{C_0M}{\|A(q^\dag)-A(q)\|_{0,\Gamma_a}})^\kappa},
$$
which, together with (\ref{innerproduct00}), implies that 
\beq\label{eq:exp0}
\Re(q^\dag,P_\lambda (q^\dag -q) )_{\Gamma_i}\leq  \lambda^{1/2-s} \|\omega\|_{\Gamma_i}  \|\alpha\|_{C^1(\Gamma_i)}   C_n \f{C M}{\log(\frac{C_0M}{\|A(q^\dag)-A(q)\|_{0,\Gamma_a}})^\kappa}.
\eeq
 To apply Lemma \ref{lemma:vsc}, we first notice that the continuity of the trace operator yields $\|A(q^\dag)-A(q)\|_{\Gamma_a}\leq c'M$ with a constant $c'>0$, independent of $q^\dag$ and $q$.  
Without loss of generality, we assume that  $C_0$ is large enough so that $\frac{C_0 }{c'}>e^{\kappa+1} $. Otherwise, we can enlarge $C_0$ and $C$ at the same time since the mapping
$M\mapsto \frac{M}{\log(\frac{M}{\delta})^\kappa} $ is increasing over $[\delta,+\infty)$. It is readily checked that  the function 
$$
G_0(\delta)= { \|\omega\|_{\Gamma_i}  \|\alpha\|_{C^1(\Gamma_i)} }\f{C M}{\log(\frac{C_0M}{\delta})^\kappa }\quad \forall \, \delta\in (0,c'M], 
$$
is concave, continuous and strictly increasing over $(0,c'M]$.  In conclusion, the mapping  
$$
\Psi_0 : (0,\infty) \to (0,\infty), \quad  \Psi_0(\delta) := \left \{
\begin{split}
   & (d_\delta G)(c'M)(x-c'M)+G(c'M)
     &\textrm{ if } \delta \in (c'M,\infty) \\
& G_0(\delta)  &\textrm{ if } \delta \in (0,c'M] 
 \end{split}
 \right.
$$
 is a concave  index function   satisfying 
\beq\label{eq:exp20}
\Re(q^\dag,P_\lambda (q^\dag -q) )_{\Gamma_i}\leq  \lambda^{1/2-s} \Psi_0(\|A(q^\dag)-A(q)\|_{0,\Gamma_a}) \quad \forall q\in \mathcal{U}_{q^\dag}.
\eeq
In view of (\ref{decay00}) and (\ref{eq:exp20}),   if $0< s < 1/2$, then Lemma \ref{lemma:vsc} is applicable and yields the assertion.  If $s=1/2$, then \eqref{eq:exp20} implies 
$$
\Re(q^\dag, P_\lambda(q^\dag -q) )_{\Gamma_i}\leq  \Psi_0(\|A(q^\dag)-A(q)\|_{0,\Gamma_a}) \quad \forall q\in \mathcal{U}_{q^\dag}.
$$
By passing $\lambda \to\infty$,  we see that  the claim  is true for $\Psi=\Psi_0$.
\end{proof}

\end{document}